\documentclass{article}
\usepackage{amsmath,amssymb,amsfonts,amsthm}
\usepackage{xfrac}

\newcommand{\R}{\mathbb{R}}
\newcommand{\Z}{\mathbb{Z}}
\newcommand{\N}{\mathbb{N}}
\newcommand{\Ph}{\R^N}
\newcommand{\E}{\ensuremath{\mathbb{E}}}
\newcommand{\D}{\R^d}
\newcommand{\DTwo}{\R^2}
\newcommand{\DT}{\D\times\R^+}

\newcommand{\TD}{\R^+}
\newcommand{\DTT}{\D\times[0,T)}
\newcommand{\DP}{\R^{N\times d}}
\newcommand{\Meas}{\ensuremath{\mathcal{M}}}
\newcommand{\Prm}{\ensuremath{\text{Prob}}}
\newcommand{\dd}{\ensuremath{\:d}}
\newcommand{\ddt}[1]{\ensuremath{\frac{d#1}{dt}}}
\newcommand{\Dx}{\ensuremath{h}}
\newcommand{\Dy}{\ensuremath{\Delta y}}
\newcommand{\Dt}{\ensuremath{\Delta t}}
\newcommand{\Ypair}[2]{\langle #1, #2\rangle}
\newcommand{\NumericalEvolution}[1]{\ensuremath{\mathcal{S}^{#1}}}
\newcommand{\Y}{\ensuremath{\textbf{Y}}}
\newcommand{\Prb}{\ensuremath{\mathbb{P}}}

\newcommand{\weakto}{\ensuremath{\rightharpoonup}}
\newcommand{\Mesh}{\ensuremath{\mathcal{M}}}
\newcommand{\algref}[1]{Algorithm~\ref{#1}}
\newcommand{\hf}{\sfrac{1}{2}}
\newcommand{\Borel}{\ensuremath{\mathcal{B}}}
\newcommand{\FKMT}[3]{\ensuremath{E^{#1}_{#3}(#2)}}
\newcommand{\divx}[1]{\ensuremath{\nabla_x \cdot #1}}
\newcommand{\gradx}[1]{\ensuremath{\nabla_x #1}}
\newcommand{\weakdiffpg}[2]{\ensuremath{\Ypair{\psi}{\Ypair{#1-#2}{g}}}}

\newcommand{\WeakErrorDiffMC}[3]{\frac{\sqrt{\Var(\Ypair{\psi}{g(#1)-g(#2)}})}{#3}}
\newcommand{\WeakErrorMC}[2]{\frac{\sqrt{\Var(\Ypair{\psi}{g(#1)})}}{#2}}
\newcommand{\StochasticLTwoNorm}[1]{\ensuremath{\left\|#1\right\|_{L^2(\Omega)}}}

\newcommand{\WorkFVM}[2]{\text{Work}_{\text{FVM}}(#1, #2)}
\newcommand{\WorkFVMCFL}[1]{\text{Work}_{\text{FVM}}(#1)}
\newcommand{\WorkMLMC}[1]{\text{Work}_{\text{MLMC}}(#1)}
\newcommand{\WorkMC}[2]{\text{Work}_{\text{MC}}(#1,#2)}
\newcommand{\MLMC}[3]{E_{\text{MLMC}, #1}^{#2}(#3)}
\newcommand{\bigO}{\mathcal{O}}
\newcommand{\pdpd}[2]{\ensuremath{\frac{\partial #1}{\partial #2}}}
\usepackage{subcaption}

\DeclareMathOperator{\id}{id}
\DeclareMathOperator{\Var}{Var}
\DeclareMathOperator{\Law}{Law}
\DeclareMathOperator{\Lip}{Lip}
\DeclareMathOperator{\SpeedUp}{SpeedUp}
\usepackage{authblk}

\usepackage{todonotes}
\newtheorem{theorem}{Theorem}
\theoremstyle{definition}
\newtheorem{definition}{definition}
\newtheorem{corollary}{Corollary}
\newtheorem{lemma}{Lemma}
\newtheorem{remark}{Remark}
\newtheorem{Algorithm}{Algorithm}
\usepackage{cleveref}
\usepackage{url}

\title{Multilevel Monte-Carlo for measure valued solutions \thanks{This work was funded by the ERC project SPARCCLE 306279}}

\author{
	Kjetil Olsen Lye\thanks{Seminar for applied mathematics, ETH Zurich,
		(kjetil.lye@sam.math.ethz.ch).}}

\begin{document}
	
	\maketitle
	
\begin{abstract}
	We propose a Multilevel Monte-Carlo (MLMC) method for computing entropy measure valued solutions of hyperbolic conservation laws. Sharp bounds for the narrow convergence of MLMC for the entropy measure valued solutions are proposed. An optimal work-vs-error bound for the MLMC method is derived assuming only an abstract decay criterion on the variance. Finally, we display numerical experiments of cases where MLMC is, and is not, efficient when compared to Monte-Carlo.
\end{abstract}


A class of equations that is of great interest for both physics and engineering, is the class of hyperbolic conservation laws, of the form
\begin{equation}
\label{eq:hyperbolic_conservation_law}
\begin{array}{rl}
u_t+\divx f(u)=0\\
u(x,0) = u_0(x).
\end{array}
\end{equation}
Here $u:\DT\to \Ph$ is the unknown and $f:\Ph\to \DP$ is the flux function. We will concern ourselves with the hyperbolic case, in other words when $\partial_u (f\cdot n)$ has real eigenvalues for each $|n|=1$.

Examples of this class include the shallow water equations, the compressible Euler equations for gas dynamics and the magneto hydrodynamics equations for plasmas.  For a comprehensible introduction to hyperbolic conservation laws, consult~\cite{Dafermos}.
 
\section{Introduction}
It is well known that solutions of \eqref{eq:hyperbolic_conservation_law} can develop discontinuities in finite time, and one therefore needs to consider a weak formulation. 
\begin{definition}
	We say that $u\in L^\infty(\DT, \Ph)$ is a weak solution of \eqref{eq:hyperbolic_conservation_law} if
	\[\int_{\TD}\int_{\D} u(x,t)\phi_t(x,t)+\gradx{\phi}(x,t)\cdot f(u(x,t))\phi_x(x,t)\dd x \dd y+\int_{\TD}u_0(x)\phi(x,0)\dd x,\]
	holds for every $\phi \in C_c^1(\DT, \R)$.
\end{definition}
Weak solutions of \eqref{eq:hyperbolic_conservation_law} are in general not unique, therefore one seeks the physical relevant solutions, in terms of  entropy conditions.
\begin{definition}
A pair $(\eta, q)$ with $\eta:\Ph\to \R$ and $q:\Ph\to \D$ is called an entropy pair if $\eta$ is convex and $q'=\eta'\cdot f'$.
\end{definition}
\begin{definition}
A weak solution $u$ of \eqref{eq:hyperbolic_conservation_law} is an entropy solution if the entropy inequality
\[\eta(u)_t+\gradx{q(u)}\leq 0\]
holds for all entropy pairs $(\eta, q)$, that is if 
\[\int_{\TD}\int_{\D}\eta(u(x,t))\phi_t(x,t)+\gradx{\phi(x,t)}\cdot q(u(x,t))\dd x\dd t\geq 0,\]
for all $0\leq \phi\in  C_c^1((0,\infty), \R)$.
\end{definition}
It is by now well-known that entropy solutions of scalar conservation laws ($N=1$) are unique and well-posed \cite[6.2.3]{Dafermos}. 
On the other hand, no global well-posedness result is available for a generic system of conservation laws in several space dimensions. In fact, one can construct multiple weak entropy solutions with the same initial data~\cite{delellis_nonunique1,delellis_nonunique2}.

In addition, recent theoretical~\cite{glimm1,glimm2} and numerical evidence~\cite{fkmt} indicate that a more appropriate notion of solution for conservation laws is the notion of a measure valued solution, as introduced by DiPerna~\cite{diperna}.

\subsection{Numerical approximation of conservation laws}

By now, there is a large set of numerical methods for approximating solutions of \eqref{eq:hyperbolic_conservation_law}. Popular choices include the finite volume (difference) based methods~\cite{leveque_green}, using TVD~\cite{TVD}, ENO~\cite{ENO} and WENO~\cite{WENO} reconstruction. Another popular method is the discontinuous Galerkin method~\cite{DG}.

Convergence results are available for one-dimensional scalar equations~\cite{conv_monotone_1d} and multi-dimensional scalar equations~\cite{conv_monotune_multid} assuming the numerical scheme is monotone. Convergence results for abitrarily high order schemes are also available~\cite{tecno} in the scalar case. There are also results available for the discontinuous Galerkin method~\cite{hiltebrand_stdg} for scalar conservation laws.

However, for systems of conservation laws in several space dimensions, \emph{no} convergence result is known. In fact, recent numerical studies show that there are initial data for the compressible Euler equations, where numerical schemes may fail to converge~\cite{fkmt}.

\subsection{Uncertainty quantification}

The numerical methods outlined in the previous section all rely on measuring the initial data $u_0$ exactly. However, in real world scenarios, accurate measurements of the initial data may not be available, and it is common to model the initial data $u_0$ and the corresponding solution $u$, as a random field. This approach is commonly known as uncertainty quantification (UQ). 

There is a large set of methods using the approach of random fields for hyperbolic conservation laws. The Monte-Carlo method has been shown to be robust and reliable for a very wide variety of problems in UQ for conservation laws. However, it suffers from the high runtime cost. The multilevel Monte-Carlo (MLMC) method, first introduced by Heinrich \cite{Heinrich2001} for parametric integration, and later by Giles \cite{giles} for stochastic equations, has been shown to have a considerable speed-up in the case of scalar conservation laws \cite{mlmc_hyperbolic}. 

There are also other approaches that exploit further regularity of the solutions, including stochastic finite volume methods~\cite{sfvm1,sfvm2} and stochastic collocation methods~\cite{Tokareva2013}.  

While the approach through random fields has been very successful for scalar hyperbolic conservation laws, its theoretical foundation relies on \eqref{eq:hyperbolic_conservation_law} being well-posed to pick the solution $u(\omega;\cdot)$ for almost all $\omega$ as in~\cite{mlmc_hyperbolic}. However, as has been indicated by both theory~\cite{delellis_nonunique1} and numerical experiments~\cite{fkmt}, non-linear systems of conservation laws need not be well-posed, and hence may not have a unique solution. The framework for UQ, as found in~\cite{mlmc_hyperbolic}, is therefore not applicable for systems of conservation laws, and the results obtained for scalar conservation laws do not apply for system of conservation laws.

\subsection{Measure valued solutions}
A weaker notion of solutions is the notion of a measure valued solution of conservation laws, as first introduced by DiPerna~\cite{diperna}. One seeks a measure valued function $(x,t)\mapsto \nu_{x,t}\in \Prm(\Ph)$ satisfying \eqref{eq:hyperbolic_conservation_law} in the sense of measures.

One can embed uncertainty quantifications within the framework of measure valued solutions, and indeed measure valued solutions have the clear advantage that they do not require a notion of a weak path solution of the underlying deterministic conservation law. In essence, the definition of a measure valued solution does not rely on a well-posed underlying deterministic equation, whereas the definition of a random field solution does. 
s
Fjordholm et al~\cite{fkmt} developed and tested a numerical algorithm, the so-called FKMT algorithm, for computing measure valued solutions of conservation laws. Through numerical experiments, the algorithm exhibited convergence and stability in the space of Young measures. The FKMT algorithm uses a Monte-Carlo sampling procedure which has a sampling error that scales as $M^{-\hf}$, where $M$ is the number of samples. The sampling error gives the Monte-Carlo algorithm a high computational cost.

\subsection{Scope of paper}
The goal of this paper is to develop a multilevel Monte-Carlo algorithm for computing measure valued solutions of conservation laws. The key ingredient in MLMC method is the error bound, which in turn can be used to determine the optimal number of samples. 

We obtain an error estimate in the narrow topology for the MLMC method in \Cref{thm:weak_mlmc}, involving an abstract variance decay rate. We use this abstract decay rate to find the asymptotically optimal number of samples per level in \Cref{thm:number_of_samples}. The key insight from these two theorems combined, will be that we need a decay on the variance in order for MLMC to get a speedup compared to ordinary singlelevel Monte-Carlo.

In the case of a scalar conservation law, we can produce a rate for the variance decay, and the numerical experiments confirm this. We will see that in this case, the MLMC method outperforms the singlelevel Monte-Carlo method by two order.

For system of conservation laws, no known general variance reduction is known. We will therefore rely on numerical experiments to determine the variance reduction in each case. We perform two different numerical experiments. In the first experiment, the shockvortex interaction, we get variance reduction and the MLMC algorithm produces a speedup against singlelevel Monte-Carlo. However, for the second example, the Kelvin-Helmholtz initial data, there is no variance reduction. We furthermore observe that in the case of the Kelvin-Helmholtz initial data, MLMC produces no speed-up compared to singlelevel Monte-Carlo. 

\section{Measure valued solutions}
\label{sec:mvs}
For a measurable space $(X, \Sigma)$, we let $\Meas(X)$ denote the sets of (signed) measures on $(X,\Sigma)$ and  $\Prm(X)$ denote the set of probability measures on $(X, \Sigma)$. We will often concern ourselves with the case of a domain $X=D\subset \R^n$ and $\Sigma$ being the Borel $\sigma$-algebra on $D$. 

A map  $\nu: D\to \Prm(\R^N)$, is called a \emph{Young measure} from $D$ to $\R^N$ if for every $g\in C_0(\R^N)$, the map
\[D\ni z\mapsto\langle \nu_z, g\rangle :=\int_{\R^N}g(\xi)\dd \nu_z(\xi)\]
is measurable for almost all $z\in D$. We let $\Y(D,\R^N)$ denote the set of Young measures from $D$ to $\R^N$.

We interpret $\Ypair{\nu}{g}$ as the \emph{expectation} of $g$ with respect to the probability measure $\nu$. We can obtain all known one-point statistics on this form. The mean is given as
\[\E(g) = \Ypair{\nu}{g},\]
and the variance  can in a similar manner be given as
\[\Var(g) = \Ypair{\nu}{g\otimes g}-\Ypair{\nu}{g}^2.\]
We say that a sequence $\{\nu_n\}_n$ in $\Y(D,\R^N)$ converges narrowly to $\nu\in \Y(D,\R^N)$, if for all $\psi\in L^1(\R^N)$ and $g\in C_b(D)$ we have
\[\lim_{n\to \infty}\Ypair{\psi}{\Ypair{\nu_n}{g}} = \Ypair{\psi}{\Ypair{\nu}{g}}:=\int_{D}\psi(z)\Ypair{\nu_z}{g}\dd z.\]

It can be readily seen that narrow convergence implies convergence of statistical quantities, as given above.

For a probability space $(\Omega, \Sigma, \Prb)$, it can be shown \cite{fkmt} that every random variable $u: \Omega\times D\to \Ph$ gives rise to a Young measure $\nu\in\Y(D,\Ph)$ through
\[\nu_x=\Law(u(\cdot, x)).\]
Here,
\[\Law(u(\cdot, x))(A):=\Prb(u(\cdot, x)^{-1}(A))\qquad A\subset \Borel(\R^N),\]
and $\Borel(\R^N)$ denotes the Borel sets of $\R^N$.
Conversely, any Young measure can be represented as the law of a random variable, as the following theorem makes precise.

\begin{theorem}[\cite{fkmt}]
	\label{thm:young_to_randvar}
	Let $\nu\in\Y(D,\Ph)$, then there exists a probability space $(\Omega,\Sigma,\Prb)$ and a function $u:\Omega\times D\to \Ph$ such that
	\begin{equation}
	\Law(u(\cdot, x))=\nu_x\qquad \text{for all } x\in D.
	\end{equation}
\end{theorem}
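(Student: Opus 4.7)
The plan is to construct a canonical representation on a standard probability space via a quantile/inverse-CDF transform. Take $(\Omega,\Sigma,\Prb) = ([0,1], \Borel([0,1]), \mathrm{Leb})$. For the one-dimensional case $N=1$, define the cumulative distribution function $F_x(s) = \nu_x((-\infty, s])$ and set
\[
u(\omega, x) \;:=\; F_x^{-1}(\omega) \;=\; \inf\{s \in \R : F_x(s) \geq \omega\}.
\]
The classical quantile lemma then gives $\Law(u(\cdot, x)) = \nu_x$ for every $x \in D$, since $\omega$ is uniform on $[0,1]$. For $N > 1$, I would reduce to the scalar case: $\Ph$ is an uncountable standard Borel space, hence Borel-isomorphic to $\R$ via some measurable bijection $\Phi : \Ph \to \R$; applying the scalar construction to the Young measure $x \mapsto \Phi\#\nu_x$ and composing with $\Phi^{-1}$ yields the required $u$. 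Alternatively one may iterate the quantile transform coordinate by coordinate using the Rosenblatt (conditional CDF) construction on $\Omega = [0,1]^N$.

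The main obstacle is joint measurability of $(\omega, x) \mapsto u(\omega, x)$, since the definition of a Young measure only asserts measurability of $x \mapsto \Ypair{\nu_x}{g}$ for $g \in C_0(\Ph)$. To bridge this, I would first approximate the indicator $\mathbf{1}_{(-\infty, s]}$ from below by a monotone sequence $g_n \in C_0(\R)$ and apply monotone convergence to conclude that $x \mapsto F_x(s)$ is Borel measurable for every fixed $s$. A monotone class argument upgrades this to joint measurability of $(x, s) \mapsto F_x(s)$. Because the quantile function is right-continuous and nondecreasing in $\omega$, the level sets satisfy $\{(\omega, x) : u(\omega, x) \leq t\} = \{(\omega, x) : \omega \leq F_x(t)\}$, which is jointly measurable; hence $u$ itself is jointly measurable.

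Finally I would check $\Law(u(\cdot, x)) = \nu_x$: for each fixed $x$, $\Prb(u(\cdot, x) \leq t) = \mathrm{Leb}(\{\omega \leq F_x(t)\}) = F_x(t) = \nu_x((-\infty, t])$, which determines $\nu_x$ uniquely on Borel sets. The hard part is really the measurability upgrade in the previous paragraph; everything else is standard.
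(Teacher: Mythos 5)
Your proposal is correct and is essentially the same argument as in the cited reference \cite{fkmt} (the paper itself states the theorem without proof): the quantile transform on $\Omega=[0,1]$ with Lebesgue measure, the monotone-class upgrade of measurability from $g\in C_0$ to the CDF and general Borel sets, and the Borel isomorphism $\R^N\cong\R$ to reduce to the scalar case. One minor slip: the identity $\{u(\omega,x)\le t\}=\{\omega\le F_x(t)\}$ rests on the right-continuity and monotonicity of $s\mapsto F_x(s)$ (not on continuity properties of the quantile in $\omega$), but this does not affect the validity of your argument.
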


\subsection{Measure valued solutions}
We recast \eqref{eq:hyperbolic_conservation_law} to a measure valued equation given as
\begin{equation}
\label{eq:measure_valued_conservation_law}
\begin{array}{rl}
\Ypair{\nu}{\id}_t+\divx{\Ypair{\nu}{f}} =0\\
\nu_{x,0} = \sigma_x,
\end{array}
\end{equation}
interpreted in the sense of distributions.
\begin{definition}
A Young measure $\nu\in \Y(\D, \Ph)$ is said to be a measure valued solution (MVS) of \eqref{eq:measure_valued_conservation_law} if 
\begin{equation}
\label{eq:weak_measure_valued_conservation_law}
\int_{\DT}\partial_t \phi\Ypair{\nu}{\id}+\nabla_x \phi\cdot \Ypair{\nu}{f}\dd x \dd t+\int_{\D}\phi(x,0)\Ypair{\sigma}{\id}\dd x=0
\end{equation}
for all test functions $\phi\in C_c^1(\DT, \Ph)$.
\end{definition}
We furthermore define the notion of an \emph{entropy measure valued solution} in an analogous manner.

\begin{definition}
	We say that a measure valued  solution $\nu$ of \eqref{eq:hyperbolic_conservation_law} is an entropy measure valued solution(EMVS) if
	for all entropy pairs $(\eta, q)$, 
	\[\int_{\TD}\int_{\D}\Ypair{\nu_{x,t}}{\eta}\phi_t(x,t)+\gradx{\phi(x,t)}\cdot \Ypair{\nu_{x,t}}{q}\dd x\dd t\geq 0,\]
	for all $0\leq \phi\in  C_c^1(\DT, \R)$.
\end{definition}

\subsection{Non-atomic initial data and measure valued solutions as UQ}
\label{subsec:nonatomic_uq}
As was described in the previous section, in real applications, measurement errors in the initial data $u_0$ are unavoidable. Therefore, it is common to measure the initial data as a random field $u_0:\Omega\times \D\to \Ph$ and solve \eqref{eq:hyperbolic_conservation_law} for each $u(\omega, \cdot)$, and estimates the statistics (expectation, variance and so on). However, as explained above, we know that
\[\sigma_x:=\Law(u_0(\cdot; x))\qquad x \in \D\]
will be a Young measure. By setting the initial data to $\sigma$, one can use the EMVS to do uncertainty quantifications, ie. measure the mean, variance and other one point statistics.
 
\subsection{Approximation of conservation laws}
This section briefly describes the conventional way of discretizing conservation laws through finite volume and finite difference methods. For a complete review, one can consult~\cite{leveque_green}.

We discretize the computational spatial domain as a collection of cells 
\[\{(x^1_{i_1-\hf}, x^1_{i_1+\hf})\times\cdots\times(x^d_{i^d-\hf}, x^d_{i^d+\hf})\}_{(i^1,\ldots,i^d)}\subset \D,\]
with corresponding cell midpoints
\[x_{i^1,\ldots, i^d}:=\left(\frac{x^1_{i_1+\hf}+x^1_{i_1-\hf}}{2},\ldots,\frac{x^d_{i_d+\hf}+x^d_{i_d-\hf}}{2}\right).\] 
For simplicity, we assume our mesh is equidistant, and set 
\[\Dx := |x^1_{i^1+\hf} - x^1_{i^1-\hf}|.\]
We will describe the semi-discrete case. For each cell $(i^1,\ldots, i^d)$, we let $u^{\Dx}_{i^1,\ldots, i^d}(t)$ denote the averaged value in the cell at time $t\geq 0$.

We use the following semi-discrete formulation
\begin{equation}
\label{eq:semi_d}\begin{aligned}\ddt{}u^{\Dx}_{i^1,\ldots,i^d}(t)+\sum_{k=1}^d\frac{1}{\Dx}\left(F^{k,\Dx}_{i^d,\ldots,i^k+\hf,\ldots, i^d}(t)-F^{k,\Dx}_{i^d,\ldots,i^k-\hf,\ldots, i^d}(t)\right)&=0\\
u^{\Dx}_{i^1,\ldots,i^d}(0)&=u_0(x_{i^1,\ldots,i^d}).
\end{aligned}
\end{equation}

Where we have used a \emph{numerical flux function} $F^k_{i^1,\ldots,i^d}$. In this paper, the numerical flux function will always have a finite stencil width, meaning $F^{k,\Dx}_{i^1,\ldots,i^k,\ldots,i^d}(t)$ will only depend on $u^{\Dx}_{i^1,\ldots,j^k,\ldots,i^k}(t)$ for $j^k=i^k-p+1,\ldots,i^k+p$.

We furthermore assume the numerical flux function is consistent with $f$ and locally Lipschitz continuous, which amounts to requiring that for every compact $K\subset \D$, there exists a constant $C>0$ such that for $k=1,\ldots,d$, it holds that
\begin{equation}
|F^{k,\Dx}_{i^1,\ldots,i^d}(t)-f(u^{\Dx}_{i^1,\ldots,i^d})|\leq C\sum_{j^k=i^d-p+1}^{i^k+p}|u^{\Dx}_{i^1,\ldots,i^d}(t)-u^{\Dx}_{i^1,\ldots, j^k,\ldots,i^d}(t)|,
\end{equation}
whenever $\{u^{\Dx}_{i^1,\ldots, i^k-p+1,\ldots,i^d}(t),\ldots, u^{\Dx}_{i^1,\ldots, i^k+p,\ldots, i^d}(t)\}\subset K$.

We let 
$\NumericalEvolution{\Dx}:L^\infty(\D,\Ph)\to L^\infty(\DT,\Ph)$
be the discrete numerical evolution operator corresponding to \eqref{eq:semi_d}.

The current form of \eqref{eq:semi_d} is continuous in time, and one needs to employ a time stepping method to discrete the ODE in time, usually through some strong stability preserving Runge-Kutta method.

\subsection{Approximation of measure valued solutions}

In this section we repeat what is known for the approximation of measure valued solutions. 

Let $\sigma\in\Y(\D, \Ph)$ be the initial data, and choose $u_0$ according to \Cref{thm:young_to_randvar} such that the law of $u_0(\cdot, x)$ is $\sigma_x$. Introduce $u^{\Dx}:\Omega\times \DT\to \Ph$ by 
\[u^{\Dx}(\omega, x, t):=\NumericalEvolution{\Dx}(u_0(\omega,\cdot))(x,t).\]
We furthermore set
\[\nu^{\Dx}_{x,t} :=\Law(u^{\Dx}(\cdot, x, t).\]
In 2D, we have the following result \cite{fkmt}.

\begin{theorem}
	\label{thm:emvs_spatial_convergence}
	Assume the numerical scheme of $\NumericalEvolution{}$ satisfies the following requirements:
	\begin{enumerate}
		\item \textbf{Uniform boundednesss:}
		\begin{equation}
		\|u^{\Dx}(\omega)\|_{L^\infty(\DT)}\leq C\qquad\text{for all } \omega\in\Omega.
		\end{equation}
		\item \textbf{Weak BV:} There exists $1\leq r<\infty$ such that
		\begin{equation}
		\lim_{\Dx\to 0}\int_0^T\sum_{i,j}\left(|u^{\Dx}_{i+1,j}(\omega, t)-u^{\Dx}_{i,j}(\omega, t)|^r+|u^{\Dx}_{i,j+1}(\omega, t)-u^{\Dx}_{i,j}(\omega, t)|^r\right)\Dx^2 \dd t=0.
		\end{equation}
		\item \textbf{Entropy consistency.} The numerical scheme is entropy stable with respect to an entropy pair $(\eta,q)$, in the sense that there exists a Lipschitz numerical entropy flux $(Q^x_{i+1/2}(t),Q^y_{i, j+1/2}(t))$ consistent with the entropy flux $q$ such that the computed solution obeys the discrete entropy inequality
		\begin{equation}
		\eta(u^{\Dx})_t + \frac{1}{\Dx}\left(Q^{x,\Dx}_{i+1/2,j}-Q^{x,\Dx}_{i
			-1/2,j}\right)+\frac{1}{\Dy}\left(Q^{y,\Dx}_{i,j+1/2}-Q^{y,\Dx}_{i,j+1/2}\right)\leq 0,
		\end{equation}
		for all  $t>0$, $i,j\in\Z$, $\omega\in\Omega$.
		\item \textbf{Consistency with initial data.} If $\sigma^{\Dx}_x$ is the law of $u^{\Dx}(\cdot, x, 0)$, then 
		\begin{equation}
		\lim_{\Dx\to 0}\int_{\DTwo}\psi(x)\Ypair{\sigma^{\Dx}_x}{\id}\dd x=0\qquad \text{for all } \psi\in C_c^1(\DTwo)
		\end{equation}
		and
		\begin{equation}
		\limsup_{\Dx\to 0}\int_{\DTwo}\psi(x)\Ypair{\sigma^{\Dx}_x}{\eta}\dd x\leq 0\qquad \text{for all } 0\leq \psi\in C_c^1(\DTwo).
		\end{equation}
		
	\end{enumerate}
	Then up to a subsequence $\nu^{\Dx}$ converges to an entropy measure valued solution of \eqref{eq:hyperbolic_conservation_law} with initial data $\sigma$.
\end{theorem}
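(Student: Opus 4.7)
The plan is to mirror the standard Lax--Wendroff/DiPerna argument, but carried out at the level of laws rather than pathwise, with the entropy consistency and initial data consistency hypotheses supplying exactly the ingredients the deterministic proof needs.

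First I would extract a narrowly convergent subsequence. By the uniform boundedness assumption, the random variables $u^{\Dx}(\omega,x,t)$ take values in a fixed compact ball $\bar B_C\subset\Ph$ for every $\omega,x,t$, so $\nu^{\Dx}$ may be viewed as a Young measure on $\DTwo\times[0,T]$ with all mass in $\bar B_C$. By the fundamental theorem of Young measures (extended to the parabolic cylinder as the domain), a subsequence (not relabelled) converges narrowly to some $\nu\in\Y(\DTwo\times[0,T],\Ph)$, and since $f,\eta,q$ are continuous we can use $g\in\{f,\eta,q,\id\}$ as admissible test functions against any $\psi\in L^1(\DTwo\times[0,T])$.

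Next I would derive a Lax--Wendroff style discrete weak form for each realization. Pick $\phi\in C_c^1(\DTwoT,\Ph)$, multiply the semi-discrete scheme (\ref{eq:semi_d}) by $\phi(x_{i,j},t)$, sum over $(i,j)$ with weight $\Dx^2$, integrate by parts in $t$, and do a discrete summation by parts in space. The consistency and local Lipschitz property of $F^{k,\Dx}$ together with the weak BV assumption let me replace numerical fluxes by $f(u^{\Dx})$ up to an error bounded by $C\sum|u^{\Dx}_{i+1,j}-u^{\Dx}_{i,j}|^{1/r'}$ times the weak-BV sum (by H\"older), which tends to $0$ as $\Dx\to 0$ for each $\omega$. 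Taking expectations (noting all quantities are uniformly bounded, so Fubini and dominated convergence apply) converts $\E f(u^{\Dx}(\cdot,x,t)) = \Ypair{\nu^{\Dx}_{x,t}}{f}$ and yields
\[
\int_0^T\!\!\int_{\DTwo}\phi_t\,\Ypair{\nu^{\Dx}}{\id}+\nabla_x\phi\cdot\Ypair{\nu^{\Dx}}{f}\,dx\,dt+\int_{\DTwo}\phi(x,0)\Ypair{\sigma^{\Dx}_x}{\id}\,dx=o(1).
\]

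Then I would pass to the limit $\Dx\to 0$. The bulk terms converge by the narrow convergence of step one, with test functions $\phi_t,\nabla_x\phi\in L^1(\DTwo\times[0,T])$ (since $\phi$ has compact support) paired against $g=\id$ and $g=f$ in $C_b(\bar B_C)$. The initial data term converges by assumption 4 with $\psi(x)=\phi(x,0)$. This gives (\ref{eq:weak_measure_valued_conservation_law}), so $\nu$ is a measure valued solution. The entropy inequality is obtained identically: start from the discrete entropy inequality of assumption 3, multiply by a nonnegative $\phi\in C_c^1$, use Lipschitz consistency of $Q^{x,\Dx},Q^{y,\Dx}$ plus weak BV to pass from numerical entropy flux to $q(u^{\Dx})$, take expectation, and use narrow convergence plus the second half of assumption 4 (the $\limsup$ on the initial entropy) to conclude.

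The main obstacle is step two, specifically the Lax--Wendroff-type consistency argument: one must show that the weak BV assumption, which controls only an $L^r$ sum of jumps, suffices to close the error arising from replacing $F^{k,\Dx}$ by $f$ against a smooth test function. This requires inserting the Lipschitz constant of $\phi$, applying H\"older with the $L^\infty$ bound to upgrade weak BV to the right power, and checking that the resulting constant is controlled uniformly in $\omega$ so that expectations commute with the limit. Once this estimate is in hand, the entropy case is structurally identical, and the convergence of expectations to Young-measure pairings is purely linear.
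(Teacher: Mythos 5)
Your sketch is essentially the argument behind this theorem: the paper itself quotes the result from \cite{fkmt} without reproducing a proof, and the cited proof proceeds exactly as you outline — weak-* (narrow) compactness of uniformly bounded Young measures, a Lax--Wendroff-type consistency argument in which the local Lipschitz flux bound plus H\"older and the weak BV estimate kill the flux-replacement error pathwise, dominated convergence to pass to expectations (laws), and the same machinery with the discrete entropy inequality and the initial-data consistency hypotheses for the entropy inequality and attainment of $\sigma$. No genuine gap beyond the level of detail expected in a plan; the H\"older step you flag as the main obstacle is handled exactly as you describe.
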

\begin{remark}
	The above theorem can be generalized to arbitrary space dimension, see \cite{fkmt}.
\end{remark}
\begin{remark}
	The above theorem tells us that the spatial discretization converges However, we are still left with the question of approximating the stochastic component. In other words, if we simulate for \emph{all} $\omega\in\Omega$, we will get a good approximation of $\nu$. Since $\Omega$ is in general (uncoutable) infinite, this is not a fruitful approach, and we refer to the next section for a solution.
\end{remark}
	
\subsection{The FKMT algorithm for computing measure valued solutions}
Fjordholm et al~\cite{fkmt} constructed a numerical Monte-Carlo based algorithm, the so-called FKMT algorithm, to compute measure valued solutions of \eqref{eq:measure_valued_conservation_law}. We describe the algorithm here for completeness and to establish notation.

\begin{Algorithm}
\label{alg:fkmt}
Let $\sigma \in\Y(\D, \Ph)$ be the initial data, and choose a probability space $(\Omega, \Sigma, \Prb)$ together with $u_0:\Omega\times \D\to\Ph$ such that $\sigma_x=\Law{u_0(\cdot, x)}$.
\begin{enumerate}
\item Draw $M$ independent samples $\{u_0^k\}_k$ of $u_0$.
\item Evolve the samples
\[u^{\Dx}_k = \NumericalEvolution{\Dx}(u_0^k)\]
\item Estimate the measure
\[\FKMT{M}{u_0}{\Dx}:=\frac{1}{M}\sum_{k=1}^M \delta_{u^{\Dx}_k}\]
\end{enumerate}
\end{Algorithm}

In~\cite{fkmt}, an error bound for the FKMT algorithm was obtained. Furthermore, if one follows the proof of~\cite[Theorem 4.9]{fkmt}, one can get a sharp bound on the stochastic error. We repeat the proof here for completeness. First we need a technical lemma showing the precise bound of the Monte-Carlo error.

\begin{lemma}
\label{lem:mc}
Let $(\Omega, \Sigma, \Prb)$ be a probability space, $M\in\N$  and $G\in L^2(\Omega)$, $\{G_k\}_{k=1}^M\subset L^2(\Omega)$ be independent identically distributed random variables.
Then
\[\StochasticLTwoNorm{\E(G)-\frac{1}{M}\sum_{k=1}^M G_k}^2=\frac{1}{M}\Var(G)\].
\end{lemma}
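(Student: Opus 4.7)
The plan is to recognize this as the classical variance computation for the sample mean of i.i.d.\ random variables, and reduce the claim to two standard facts: unbiasedness of the Monte Carlo estimator, and additivity of variance for independent summands.

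First I would set $S_M := \frac{1}{M}\sum_{k=1}^M G_k$. By linearity of expectation and the assumption that the $G_k$ are identically distributed to $G$, we have $\E(S_M) = \frac{1}{M}\sum_{k=1}^M \E(G_k) = \E(G)$. This identifies the left-hand side of the claim as the variance of $S_M$, since by definition
\[
\StochasticLTwoNorm{\E(G) - S_M}^2 = \E\!\left[(\E(S_M) - S_M)^2\right] = \Var(S_M).
\]

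Next I would compute $\Var(S_M)$ directly. Expanding,
\[
\Var(S_M) = \frac{1}{M^2}\Var\!\left(\sum_{k=1}^M G_k\right) = \frac{1}{M^2}\sum_{k=1}^M \Var(G_k) + \frac{1}{M^2}\sum_{k\ne j}\mathrm{Cov}(G_k, G_j).
\]
Independence kills all the cross-covariance terms, and identical distribution gives $\Var(G_k) = \Var(G)$ for each $k$. Therefore $\Var(S_M) = \frac{1}{M^2} \cdot M \cdot \Var(G) = \frac{1}{M}\Var(G)$, which is exactly the claimed identity.

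There is no genuine obstacle here: the only thing to be careful about is that $G \in L^2(\Omega)$ ensures finiteness of $\Var(G)$, so all manipulations above are justified. One could equivalently carry out the argument by expanding the square $(\E(G) - S_M)^2$ directly and using $\E(G_k G_j) = \E(G_k)\E(G_j)$ for $k\ne j$ by independence, but bundling the computation through the $\Var$ functional is cleaner.
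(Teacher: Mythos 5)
Your proof is correct and follows essentially the same route as the paper: the paper expands the square $\bigl(\E(G)-\frac{1}{M}\sum_k G_k\bigr)^2$ directly and uses independence to annihilate the cross terms, which is exactly the computation you package through unbiasedness of the sample mean and additivity of variance for independent summands. The only difference is presentational, as you yourself note at the end.
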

\begin{proof}
	We have
	\begin{multline*}
	\left(\E(G)-\frac{1}{M}\sum_{k=1}^M G_k(\omega)\right)^2=\frac{1}{M^2}\sum_{k=1}^M\Big(\E(G)- G_k(\omega)\Big)^2 \\
	+\frac{1}{M^2}\sum_{k=1}^M\sum_{k'\neq k}\Big(\E(G)- G_k(\omega)\Big)\Big(\E(G)- G_{k'}(\omega)\Big).
	\end{multline*}
	Since $G_k$ are independent, we have for $k\neq k'$
	\begin{align*}\E\big(\left(\E(G)- G_k(\omega)\right)\left(\E(G)- G_{k'}(\omega)\right)\big)&=\E\Big(\E(G)- G_k(\omega)\Big)\E\Big(\E(G)- G_{k'}(\omega)\Big)\\
	&=\left(\E(G)- \E(G_k(\omega))\right)\left(\E(G)- \E(G_{k'}(\omega))\right)\\
	&=0.
	\end{align*}
	
	We end up with
	\begin{align*}
	\StochasticLTwoNorm{\E(G)-\frac{1}{M}\sum_{k=1}^M G_k}^2&=\E(\Big(\E(G)-\frac{1}{M}\sum_{k=1}^M G_k\Big)^2)\\
	&=\frac{1}{M^2}\sum_{k=1}^M\E(\Big(\E(G)- G_k(\omega)\Big)^2)\\
	&=\frac{1}{M^2}\sum_{k=1}^M\left(\E(G^2) - \E(G)^2\right)\\
	&=\frac{1}{M}\Var(G).
	\end{align*}
\end{proof}
\begin{theorem}
	\label{thm:fkmt_mc}
If $\NumericalEvolution{\Dx}$ obeys the requirements of \Cref{thm:emvs_spatial_convergence}, then \algref{alg:fkmt} converges, that is up to subsequences, we have
\[\FKMT{M}{u_0}{\Dx}\weakto \nu,\]
where $\nu$ is a entropy measure valued solution of \eqref{eq:hyperbolic_conservation_law}.
 
Concretely, for every $\psi\in L^1(\DT)$ and $g\in C_b(\Ph)$, we have

\begin{equation}
\label{eq:fkmt_error_sharp}
\StochasticLTwoNorm{\weakdiffpg{\FKMT{M}{u}{\Dx}}{\nu^{\Dx}}}=\frac{1}{M^{1/2}}\sqrt{\Var(\Ypair{\psi}{g(u^{\Dx})})}.
\end{equation}
Up to subsequences, $\nu^{\Dx}\weakto \nu$.
\end{theorem}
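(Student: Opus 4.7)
The plan is to prove \eqref{eq:fkmt_error_sharp} by unrolling the Young-measure pairing on the left-hand side into a scalar-valued expression to which Lemma \ref{lem:mc} applies, and then to deduce the narrow convergence by combining \eqref{eq:fkmt_error_sharp} with \Cref{thm:emvs_spatial_convergence} via a triangle inequality and a subsequence argument.

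For the sharp identity, fix $\psi \in L^1(\DT)$ and $g \in C_b(\Ph)$, and introduce the scalar random variables $G := \Ypair{\psi}{g(u^{\Dx})}$ and $G_k := \Ypair{\psi}{g(u^{\Dx}_k)}$ for $k=1,\ldots,M$. Since the $u_0^k$ are i.i.d.\ and $\NumericalEvolution{\Dx}$ is deterministic, the $G_k$ are i.i.d.\ copies of $G$. Unrolling $\FKMT{M}{u}{\Dx}_z = M^{-1}\sum_k \delta_{u^{\Dx}_k(z)}$ yields
\[
\Ypair{\psi}{\Ypair{\FKMT{M}{u}{\Dx}}{g}} = \frac{1}{M}\sum_{k=1}^M G_k,
\]
while Fubini's theorem, justified by $\psi \in L^1$, boundedness of $g$ and the uniform $L^\infty$ bound on $u^{\Dx}$ from \Cref{thm:emvs_spatial_convergence}, gives
\[
\Ypair{\psi}{\Ypair{\nu^{\Dx}}{g}} = \int_{\DT} \psi(z)\, \E\bigl[g(u^{\Dx}(\cdot, z))\bigr] \dd z = \E(G).
\]
Applying Lemma \ref{lem:mc} to $G$ and its copies then gives $\StochasticLTwoNorm{\weakdiffpg{\FKMT{M}{u}{\Dx}}{\nu^{\Dx}}}^2 = \Var(G)/M$, which is the squared version of \eqref{eq:fkmt_error_sharp}.

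For the narrow convergence claim, I would combine \eqref{eq:fkmt_error_sharp} with \Cref{thm:emvs_spatial_convergence}. The latter yields a subsequence $\Dx_n \to 0$ and an entropy measure valued solution $\nu$ with $\nu^{\Dx_n} \weakto \nu$. For any $M_n \to \infty$, the uniform $L^\infty$ bound together with continuity of $g$ and $\psi \in L^1$ keeps $\Var(\Ypair{\psi}{g(u^{\Dx_n})})$ bounded uniformly in $n$; hence $\Ypair{\psi}{\Ypair{\FKMT{M_n}{u}{\Dx_n} - \nu^{\Dx_n}}{g}} \to 0$ in $L^2(\Omega)$. Passing to a further subsequence to upgrade $L^2$- to almost-sure convergence, and then applying the triangle inequality
\[
\weakdiffpg{\FKMT{M_n}{u}{\Dx_n}}{\nu} = \weakdiffpg{\FKMT{M_n}{u}{\Dx_n}}{\nu^{\Dx_n}} + \Ypair{\psi}{\Ypair{\nu^{\Dx_n} - \nu}{g}},
\]
yields $\FKMT{M_n}{u}{\Dx_n} \weakto \nu$ almost surely along this subsequence.

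The main obstacle is largely bookkeeping rather than a deep step: identifying the Young-measure pairing $\Ypair{\psi}{\Ypair{\cdot}{g}}$ with the expectation of the scalar random variable $G$ so that Lemma \ref{lem:mc} applies verbatim, and justifying the Fubini swap between the deterministic integral over $\DT$ and the expectation over $\Omega$. Once this identification is in place, the rest reduces to Lemma \ref{lem:mc} plus the deterministic convergence from \Cref{thm:emvs_spatial_convergence}.
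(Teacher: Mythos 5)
Your proposal is correct and follows essentially the same route as the paper's proof: you define the scalar random variables $G=\Ypair{\psi}{g(u^{\Dx})}$ and $G_k=\Ypair{\psi}{g(u^{\Dx}_k)}$, identify $\Ypair{\psi}{\Ypair{\nu^{\Dx}}{g}}$ with $\E(G)$ and the empirical pairing with $\frac{1}{M}\sum_k G_k$, and apply \Cref{lem:mc}, while the narrow convergence comes from \Cref{thm:emvs_spatial_convergence}. Your added details (the Fubini justification and the subsequence upgrade from $L^2$ to almost-sure convergence as $M\to\infty$) are sound elaborations of steps the paper leaves implicit.
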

\begin{proof}
By \Cref{thm:emvs_spatial_convergence}, up to subsequences,
\[\nu^{\Dx}\weakto \nu,\]
where $\nu$ is a EMVS of \eqref{eq:hyperbolic_conservation_law}. The rest of the proof concentrates on the stochastic error. We set 
\[G(\omega):=\int_{\DT}\psi(z)g(u^{\Dx}(\omega, z))\dd z\]
and
\[G_k(\omega):=\int_{\DT}\psi(z)g(u_k^{\Dx}(\omega, z))\dd z.\]
We have
\[\weakdiffpg{\nu^{\Dx}}{\FKMT{M}{u}{\Dx}}=\E(G)-\frac{1}{M}\sum_{k=1}^M G_k(\omega),\]
applying \Cref{lem:mc} yields the desired result.
\end{proof}
\begin{remark}
	From the approximate measure $\FKMT{\Dx}{u_0}{M}$, one can compute the statistics through evaluating the integral $\int_{\Ph} g(\xi)\dd \FKMT{\Dx}{u_0}{M}$. For the expectation, we have
	\[\E(\FKMT{\Dx}{u_0}{M})=\int_{\Ph}\xi \dd \FKMT{\Dx}{u_0}{M}=\frac{1}{M}\sum_{k=1}^{M}u_k^{\Dx}.\]
	A similar expression can be derived for the variance.
\end{remark}

\subsection{Atomic initial data}
	Even though the initial measure $\sigma$ may be \emph{atomic}, in other words $\sigma_x = \delta_{u_0(x)}$ for some $u\in L^1(\D, \Ph)$, the entropy measure valued solutions of \eqref{eq:hyperbolic_conservation_law} may be non-atomic. However, Algorithm \ref{alg:fkmt} will produce an atomic solution in this case. 
	
	The technique developed in \cite{fkmt} is to perturb the initial data by a small random variable. The following theorem makes this precise
	
	\begin{theorem}[Theorem 4.7 \cite{fkmt}]
		\label{thm:non_atomic}
	 Let $X:\Omega\to L^1(\D, \Ph)\cap L^\infty(\D, \Ph)$ be a random field such that $\|X\|\leq 1$ $\Prb$-almost surely, and let $\epsilon,\Dx>0$. Set $\sigma^\epsilon=\delta_{u_0}+\epsilon X$, and choose $u^\epsilon_0$ to be a random field with law $\sigma^\epsilon$. Set 
	 \[u^{\Dx,\epsilon}=\NumericalEvolution{\Dx}(u^\epsilon_0),\]
	and let $\nu^{\Dx,\epsilon}$ be the law of $u^{\Dx,\epsilon}$. 
	Then there exists a subsequence $(\Dx_n, \epsilon_n)\to 0$, such that 
	\[\nu^{\Dx_n,\epsilon_n}\weakto \nu\]
	where $\nu$ is an entropy measure valued solution of \eqref{eq:hyperbolic_conservation_law}.
	
\end{theorem}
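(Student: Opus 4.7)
The plan is a two-stage limiting argument combined with a diagonal subsequence extraction. Stage one fixes $\epsilon>0$ and sends $\Dx\to 0$ using \Cref{thm:emvs_spatial_convergence}; stage two sends $\epsilon\to 0$ and passes the resulting family of entropy measure valued solutions to the limit.

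First, for each fixed $\epsilon \in (0,1]$, observe that $u_0^\epsilon = u_0 + \epsilon X$ is bounded uniformly in $\omega$ by $\|u_0\|_\infty + 1$, so the induced initial Young measure $\sigma^\epsilon$ is supported in a fixed compact set of $\Ph$ independent of $\epsilon$. Under the standing hypotheses on $\NumericalEvolution{\Dx}$, the discrete solutions $u^{\Dx,\epsilon}$ satisfy the uniform $L^\infty$ bound, weak BV estimate, discrete entropy inequality, and consistency with the initial data required by \Cref{thm:emvs_spatial_convergence}. I would check that each of these bounds is uniform in $\epsilon \le 1$ as well (they depend only on $\|u_0^\epsilon\|_\infty$ and the scheme), and then apply \Cref{thm:emvs_spatial_convergence} to conclude that, along a subsequence $\Dx_k\to 0$, $\nu^{\Dx_k,\epsilon}\weakto \nu^\epsilon$, where $\nu^\epsilon$ is an entropy measure valued solution of \eqref{eq:hyperbolic_conservation_law} with initial data $\sigma^\epsilon$.

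Next I would study the family $\{\nu^\epsilon\}_{\epsilon\in(0,1]}$. Since the uniform $L^\infty$ bound is preserved under narrow limits, every $\nu^\epsilon$ is supported in the same fixed compact $K\subset\Ph$, so the family is tight in $\Y(\DT,\Ph)$. By the fundamental theorem of Young measures, a subsequence $\epsilon_n\to 0$ exists with $\nu^{\epsilon_n}\weakto \nu$ for some $\nu\in\Y(\DT,\Ph)$. To show $\nu$ is itself an entropy measure valued solution with initial data $\delta_{u_0}$, I would pass to the limit $n\to\infty$ in the weak formulation \eqref{eq:weak_measure_valued_conservation_law} and in the entropy inequality written for $\nu^{\epsilon_n}$; the volume integrals converge by narrow convergence and continuity of $f,\eta,q$ on $K$, while the initial-data term converges because $\sigma^{\epsilon_n}\weakto \delta_{u_0}$ (a direct computation: for $g\in C_b(\Ph)$, $\Ypair{\sigma^{\epsilon_n}_x}{g} = \E[g(u_0(x)+\epsilon_n X(x))]\to g(u_0(x))$ pointwise a.e.\ by dominated convergence, then integrate against test functions).

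Finally, I would close the argument with a diagonal extraction. For each $n$, stage one provides a sequence $\Dx_k^{(n)}\to 0$ with $\nu^{\Dx_k^{(n)},\epsilon_n}\weakto \nu^{\epsilon_n}$. Fix a countable dense family $\{(\psi_m,g_m)\}$ in $L^1(\DT)\times C_b(K)$ used to metrize narrow convergence on the tight subset of Young measures supported in $K$. Choose $k(n)$ large enough that
\[
\bigl|\Ypair{\psi_m}{\Ypair{\nu^{\Dx_{k(n)}^{(n)},\epsilon_n}}{g_m}-\Ypair{\nu^{\epsilon_n}}{g_m}}\bigr|<\tfrac{1}{n},\qquad m=1,\dots,n,
\]
and set $\Dx_n:=\Dx_{k(n)}^{(n)}$. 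Combined with $\nu^{\epsilon_n}\weakto \nu$, this yields $\nu^{\Dx_n,\epsilon_n}\weakto \nu$ with $(\Dx_n,\epsilon_n)\to 0$, as required. The main obstacle I anticipate is verifying that the hypotheses of \Cref{thm:emvs_spatial_convergence}, in particular the weak BV estimate and the discrete entropy inequality, really are uniform in $\epsilon$; this relies on those bounds depending only on $\|u_0^\epsilon\|_\infty$ rather than on any further regularity of $u_0^\epsilon$, which must be checked carefully for the particular scheme $\NumericalEvolution{\Dx}$ under consideration.
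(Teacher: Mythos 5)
The paper does not prove this statement at all --- it is imported verbatim as Theorem~4.7 of \cite{fkmt} --- so there is no in-paper argument to compare against. Your proposal is a correct and essentially standard route to it: freeze $\epsilon$, invoke \Cref{thm:emvs_spatial_convergence} to get an EMVS $\nu^\epsilon$ with initial data $\sigma^\epsilon$, use the uniform compact support of the values to get narrow compactness of $\{\nu^\epsilon\}$ and pass to the limit in the (linear) weak formulation and entropy inequality together with $\sigma^{\epsilon_n}\weakto\delta_{u_0}$, and then diagonalize using a countable metrizing family; this is in the same spirit as the compactness/diagonal argument in the cited reference, which in effect treats the two-parameter family $(\Dx,\epsilon)$ at once because all the a priori bounds there are uniform in the perturbation. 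The only point you rightly flag as needing verification --- that the $L^\infty$ bound (and hence the common compact support $K$) is uniform for $\epsilon\le 1$, since it is what makes stage two and the metrization of narrow convergence on $K$ legitimate --- is indeed the crux, and it holds under the standing assumption that the bound in \Cref{thm:emvs_spatial_convergence} depends only on $\|u_0^\epsilon\|_{L^\infty}\le\|u_0\|_{L^\infty}+1$; note also that per-$\epsilon$ validity of the weak BV and entropy conditions suffices for your argument, so no uniformity in $\epsilon$ is needed there.
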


\begin{remark}
	Based on \Cref{thm:non_atomic}, we will for the rest of the paper always assume the initial data is non-atomic.
	\end{remark}
	
\subsection{Work analysis for the FKMT algorithm}
\label{subsec:work_analysis_mc}
The work of the numerical method is given as the number of floating point operations it consumes. The classical explicit finite volume method has a work estimate of
\begin{equation}
\WorkFVM{\Dx}{\Dt} = \bigO(\Dx^{-d}\Dt^{-1}).
\end{equation}
Applying the CFL requirement $\Dt= c\Dx$, gives 
\[\WorkFVM{\Dx}{\Dt} = \WorkFVMCFL{\Dx} = \bigO(\Dx^{-d-1}).\]
Thus, the work to compute $\FKMT{\Dx}{u_0}{M}$ scales as
\[\WorkMC{\Dx}{M}=M\WorkFVMCFL{\Dx}=\bigO(M\Dx^{-d-1}).\]
If we assume the spatial narrow convergence error scales as
\[\Ypair{\psi}{\Ypair{\nu-\nu^{\Dx}}{g}}=\bigO(\Dx^s)\qquad \text{for all } \psi\in L^1(\D), g\in C_b(\Ph),\]
we choose the number of samples such that the Monte-Carlo error is asymptotically the same as the spatial error. That is, we choose
\[M^{-1/2}=\bigO(\Dx^s)\Rightarrow M=\bigO(\Dx^{-2s}).\]
This gives the work estimate
\begin{equation}
\label{eq:work_mc}
\WorkMC{\Dx}{M}=\bigO(\Dx^{-d-1-2s}).
\end{equation}

\section{Multilevel Monte Carlo}
\label{sec:mlmc}
The FKMT algorithm has shown great robustness for computing measure valued solutions of \eqref{eq:measure_valued_conservation_law}, but as it made clear by the work estimate \eqref{eq:work_mc} it suffers from the high computational cost of the Monte-Carlo algorithm. It is therefore appealing to study the behavior of alternative, faster stochastic methods. 

Inspired by the FKMT algorithm \cite{giles} and the MLMC method for conservation laws \cite{mlmc_hyperbolic}, we construct the multilevel Monte-Carlo algorithm for computing measure valued solutions for conservation laws.

We assume we have a nested collection of uniform Cartesian meshes $\{\Mesh_l\}_{l=0}^L$ with associated mesh widths $\{\Dx_l\}_{l=0}^L$, where 
\[\Dx_l=2^{-l}\Dx_0\qquad \text{for } l> 0,\] 
and $\Dx_0$ is some given parameter. For each level $l=0, \ldots, L$, we set 
\[u^l:=\NumericalEvolution{\Dx_l}(u_0).\]
By simply canceling terms, we have
\[u^L=\sum_{l=1}^L\left(u^l-u^{l-1}\right)+u^0,\]
which motivates the MLMC algorithm:

%

\begin{Algorithm}[MLMC]
	\label{alg:mlmc}
	Let $\sigma \in\Y(\D, \Ph)$ be the initial data, and choose a probability space $(\Omega, \Sigma, \Prb)$ together with $u_0:\Omega\times \D\to\Ph$ such that $\sigma_x=\Law{u_0(\cdot, x)}$. Let $L\in\N$ and $\{M_l\}_{l=0}^L\subset\N$.
	\begin{enumerate}
		\item Draw $M_0$ independent samples $\{u_0^{k,0}\}_k$ of $u_0$.
		\item Evolve the samples
		\[u^{\Dx_0}_k = \NumericalEvolution{\Dx_0}(u_0^{k,0})\]
		\item For $l=1,\ldots, L$:
		\begin{enumerate}
			\item Draw $M_0$ independent samples $\{u_0^{k,l}\}_k$ of $u_0$.
			\item Evolve the samples
			\[u^{\Dx_l,+}_k = \NumericalEvolution{\Dx_l}(u_0^{k,l})\]
			and
			\[u^{\Dx_{l-1},-}_k = \NumericalEvolution{\Dx_{l-1}}(u_0^{k,l})\]
		\end{enumerate}
		\item Estimate the measure
		\[\MLMC{\{M_l\}_{l=0}^L}{\Dx_0}{u_0}:=\frac{1}{M_0}\sum_{k=1}^{M_0} \delta_{u^{\Dx_0}_k}+\sum_{l=1}^L\frac{1}{M_l}\sum_{k=1}^{M_l}\left(\delta_{u^{\Dx_l,+}_k}-\delta_{u^{\Dx_{l-1},-}_k}\right) \]
	\end{enumerate}
\end{Algorithm}
Choosing the number of samples per level, $M_l$, depends on the exact error estimate we obtain for the MLMC algorithm. In the next section, we obtain an error rate for MLMC that can be used to determine the number of samples per level.
\subsection{Convergence analysis of MLMC}
\begin{theorem}[Weak convergence of MLMC]

\label{thm:weak_mlmc}
Let $\sigma\in \Y(\D,\Ph)$ and let $\MLMC{\{M_l\}_{l=0}^L}{\Dx_0}{u_0}$ be generated by \algref{alg:mlmc}, let $g\in C_b(\Ph)$ and $\psi\in L^1(\D)$, then 
\begin{multline}
\label{eq:mlmc_weak_variance_convergence}
\StochasticLTwoNorm{\Ypair{\psi}{\Ypair{\nu-\MLMC{\{M_l\}_{l=0}^L}{\Dx_0}{u_0}}{g}}}\leq\left|\Ypair{\psi}{\Ypair{\nu-\nu^{\Dx_L}}{g}}\right|+ \frac{\sqrt{\Var(\Ypair{\psi}{g(u^{0})})}}{M_0^{1/2}}\\+\sum_{l=1}^L\frac{\sqrt{\Var(\Ypair{\psi}{g(u^{l})-g(u^{l-1})})}}{M_l^{1/2}},
\end{multline}
where $\nu\in \Y(\DT, \Ph)$ is a entropy measure valued solution of \eqref{eq:measure_valued_conservation_law}, such that, up to a subsequence,
\[\nu^{\Dx_L}\weakto\nu.\]
Furthermore, if the samples in the Monte-Carlo sampling are chosen independently across levels, in other words, if $u_0^{k,l}$ and $u_0^{k,l'}$ are independent for $l\neq l`$, the stochastic error is sharp, i.e.
\begin{multline*}\label{eq:mlmc_sharp_stochastic}
\StochasticLTwoNorm{\weakdiffpg{\nu^{\Dx_L}}{\MLMC{\{M_l\}_{l=0}^L}{\Dx_0}{u_0}}}^2=\frac{\Var(\Ypair{\psi}{g(u^{0})})}{M_0}\\+\sum_{l=1}^L\frac{\Var(\Ypair{\psi}{g(u^{l})-g(u^{l-1})})}{M_l}
\end{multline*}
\end{theorem}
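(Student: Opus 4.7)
The plan is to split the total error by inserting the fine-level law $\nu^{\Dx_L}$:
\[\Ypair{\psi}{\Ypair{\nu - \MLMC{\{M_l\}_{l=0}^L}{\Dx_0}{u_0}}{g}} = \Ypair{\psi}{\Ypair{\nu - \nu^{\Dx_L}}{g}} + \Ypair{\psi}{\Ypair{\nu^{\Dx_L} - \MLMC{\{M_l\}_{l=0}^L}{\Dx_0}{u_0}}{g}}.\]
The first term is deterministic, so its $L^2(\Omega)$ norm equals its absolute value and it supplies the first summand on the right-hand side of \eqref{eq:mlmc_weak_variance_convergence}. The existence of an EMVS $\nu$ with $\nu^{\Dx_L}\weakto\nu$ along a subsequence comes directly from \Cref{thm:emvs_spatial_convergence}.

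For the stochastic piece, I would use the telescoping identity $u^L = u^0 + \sum_{l=1}^L(u^l - u^{l-1})$ and linearity of expectation to rewrite
\[\Ypair{\psi}{\Ypair{\nu^{\Dx_L}}{g}} = \E\bigl[\Ypair{\psi}{g(u^0)}\bigr] + \sum_{l=1}^L \E\bigl[\Ypair{\psi}{g(u^l) - g(u^{l-1})}\bigr].\]
Matching this level-by-level with the definition of $\MLMC{\{M_l\}_{l=0}^L}{\Dx_0}{u_0}$ in \algref{alg:mlmc}, the stochastic error decomposes as a sum over $l=0,\ldots,L$ of single-level Monte-Carlo errors for the random variables $G_0 := \Ypair{\psi}{g(u^0)}$ and $G_l := \Ypair{\psi}{g(u^l) - g(u^{l-1})}$ for $l\geq 1$. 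Applying the triangle inequality in $L^2(\Omega)$ and then \Cref{lem:mc} on each level yields the bound \eqref{eq:mlmc_weak_variance_convergence}.

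For the sharp equality under cross-level independence, I would square the stochastic error and expand. Diagonal terms ($l=l'$) give $\Var(G_l)/M_l$ directly by \Cref{lem:mc}. Off-diagonal terms ($l\neq l'$) are products of mean-zero random variables that depend on disjoint collections of samples; by the independence hypothesis across levels these are independent, so the expectation factorises and vanishes. What remains is precisely the sum of per-level variance-over-sample contributions.

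The main obstacle is really only bookkeeping: one must carefully align the telescoping of $u^L$ with the empirical structure of \algref{alg:mlmc} so that the per-level Monte-Carlo errors are correctly identified, and one must invoke independence twice---within each level to justify \Cref{lem:mc}, and across levels to kill the off-diagonal cross terms in the sharp statement. It is worth stressing that the inequality \eqref{eq:mlmc_weak_variance_convergence} needs only within-level independence and the triangle inequality absorbs any between-level correlations, whereas the equality genuinely requires the independence across levels.
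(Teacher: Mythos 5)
Your proposal is correct and follows essentially the same route as the paper: split off the deterministic spatial term by inserting $\nu^{\Dx_L}$, telescope the fine-level law into per-level differences, apply the triangle inequality in $L^2(\Omega)$ and \Cref{lem:mc} level by level, and obtain the sharp equality from the vanishing of cross terms under independence across levels. Your added remark that the inequality only needs within-level independence while the equality genuinely requires cross-level independence is a correct and slightly more explicit account of what the paper's proof does implicitly.
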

\begin{proof}
Let 
\[\nu^{\Dx_L}:=\Law(u^{\Dx_L}).\]
By \cref{thm:emvs_spatial_convergence}, we know that up to a subsequence, 
\begin{equation}
\label{eq:finest_level_conv}
\nu^{\Dx_L}\weakto \nu
\end{equation}
where $\nu$ is a entropy measure valued solution to \eqref{eq:measure_valued_conservation_law}. A simple application of the triangle inequality, splitting up the error in a spatial term and a stochastic term, yields
\begin{multline*}\StochasticLTwoNorm{\Ypair{\psi}{\Ypair{\nu-\MLMC{\{M_l\}_{l=0}^L}{\Dx_0}{u_0}}{g}}}\leq \StochasticLTwoNorm{\Ypair{\psi}{\Ypair{\nu-\nu^{\Dx_L}}{g}}}\\+\StochasticLTwoNorm{\Ypair{\psi}{\Ypair{\nu^{\Dx_L}-\MLMC{\{M_l\}_{l=0}^L}{\Dx_0}{u_0}}{g}}}.
	\end{multline*}
Clearly, 
\[ \StochasticLTwoNorm{\Ypair{\psi}{\Ypair{\nu-\nu^{\Dx_L}}{g}}}=\left|\Ypair{\psi}{\Ypair{\nu-\nu^{\Dx_L}}{g}}\right|,\]
and we are left with estimating the stochastic error. To this end, we insert for $\MLMC{\{M_l\}_{l=0}^L}{\Dx_0}{u_0}$ and write out the telescoping sum 
$$
\nu^{\Dx_L} = \sum_{l=1}^L\left(\nu^{\Dx_l}-\nu^{\Dx_{l-1}}\right)+\nu^{\Dx_0},
$$
to obtain
\begin{multline*}
\StochasticLTwoNorm{\Ypair{\psi}{\Ypair{\nu^{\Dx_L}-\MLMC{\{M_l\}_{l=0}^L}{\Dx_0}{u_0}}{g}}}\leq \StochasticLTwoNorm{\weakdiffpg{\nu^{\Dx_0}}{\frac{1}{M_0}\sum_{k=1}^{M_0} \delta_{u^{\Dx_0}_k}}}\\+\sum_{l=1}^L\StochasticLTwoNorm{\weakdiffpg{\nu^{\Dx_l}-\nu^{\Dx_{l-1}}}{\sum_{k=1}^{M_l}\left(\delta_{u^{\Dx_l,+}_k}-\delta_{u^{\Dx_{l-1},-}_k}\right)}}.
\end{multline*}
We appeal to \eqref{eq:fkmt_error_sharp} to obtain
\[\StochasticLTwoNorm{\weakdiffpg{\nu^{\Dx_0}}{\FKMT{M_0}{u_0}{\Dx_0}}}= \WeakErrorMC{u^{0}}{M_0^{1/2}}.\]
For $l=1,\ldots, L$ we set
\[G_k^l(\omega):=\int_{\DT}\psi(z) \left( g(u^{\Dx_l,+}_k)-g(u^{\Dx_{l-1},-}_k)\right)\dd z,\]
and
\[G(\omega):=\int_{\DT}\psi(z) \left( g(u^{\Dx_l,+})-g(u^{\Dx_{l-1},-})\right)\dd z.\] 
Applying \Cref{lem:mc}, we get
\[\StochasticLTwoNorm{\weakdiffpg{\nu^{\Dx_l}-\nu^{\Dx_{l-1}}}{\left(\sum_{k=1}^{M_l}\left(\delta_{u^{\Dx_l,+}_k}-\delta_{u^{\Dx_{l-1},-}_k}\right)\right)}}=  \WeakErrorDiffMC{u^{l}}{u^{l-1}}{M_l^{1/2}}.\]
Combining these estimates we obtain \eqref{eq:mlmc_weak_variance_convergence}.

The last assertion	 is obtained by noting that if the samples are chosen independently across levels, we have
\begin{multline*}
\StochasticLTwoNorm{\Ypair{\psi}{\Ypair{\nu^{\Dx_L}-\nu^{\Dx,L}}{g}}}^2= \StochasticLTwoNorm{\weakdiffpg{\nu^{\Dx_0}}{\FKMT{M_0}{u}{\Dx_0}}}^2\\+\sum_{l=1}^L\StochasticLTwoNorm{\weakdiffpg{\nu^{\Dx_l}-\nu^{\Dx_{l-1}}}{\sum_{k=1}^{M_l}\left(\delta_{u^{\Dx_l,+}_k}-\delta_{u^{\Dx_{l-1},-}_k}\right)}}^2,
\end{multline*}
 applying the same estimates yields the sharp bound.
\end{proof}

\begin{remark}
	As with the Monte-Carlo approach, one can compute the statistics through evaluating the integral $\int_{\Ph} g(\xi)\dd \MLMC{\{M_l\}_{l=0}^L}{\Dx_0}{u_0}$. For the expectation, we have
	\begin{multline*}\E(\MLMC{\{M_l\}_{l=0}^L}{\Dx_0}{u_0})=\int_{\Ph}\xi \dd \MLMC{\{M_l\}_{l=0}^L}{\Dx_0}{u_0}=\frac{1}{M_0}\sum_{k=1}^{M}u_k^{\Dx_0}\\+\sum_{l=1}^L\frac{1}{M_l}\sum_{k=1}^{M_l}\left(u^{\Dx_l}_k-u^{\Dx_{l-1}}_k\right).
	\end{multline*}
	A similar expression can be derived for the variance.
\end{remark}

\begin{remark}
	The theorem above involves a priori unknown functions $g$ and $\psi$. In practical computational examples, $g$ and $\psi$ are known as they are given through the statistics, and we can calculate a concrete error estimate for the given $g$ and $\psi$. 
\end{remark}

\subsection{Work analysis of MLMC}
We extend the analysis of \Cref{subsec:work_analysis_mc} to the MLMC algorithm.

To compute $\MLMC{\{M_l\}}{\Dx}{u_0}$, we compute $M_l$ finite volume simulations with resolution $\Dx_l$ for each $l\geq 0$, and $M_l$ finite volume simulations with resolution $\Dx_{l-1}$ for $l>0$. Since the latter can be neglected, we obtain
\begin{equation}
\label{eq:work_mlmc}
\begin{aligned}
\WorkMLMC{\{M_l\}}&= \sum_{l=0}^LM_l\WorkFVMCFL{\Dx_l}\\
&=\sum_{l=0}^L\bigO(M_l(\Dx^{-d-1}_l))\\
&=\sum_{l=0}^L\bigO(M_l2^{l(d+1)}\Dx_0^{-d-1}).
\end{aligned}
\end{equation} 

\subsection{Choosing optimal number of samples}
\label{subsec:optimal_samples}
The number of samples per level, $M_l$, has so far been unspecified. It is common to optimize the number of samples for a given convergence rate. We handle the general case, and optimize with respect to the number of samples, where the variance across the levels is abstractly given as
\begin{equation}
\label{eq:variance_v_l}
\Var(\Ypair{\psi}{g(u^{\Dx_l})-g(u^{\Dx_{l-1}})})=: V_l.
\end{equation}

We furthermore define the \emph{asymptotic speedup} between two asymptotic work estimates $W_1$ and $W_2$ as
\[\SpeedUp(W_1,W_2):=\frac{W_2}{W_1}.\]

\begin{theorem}
	\label{thm:number_of_samples}
	Assume the variance between levels is given by \eqref{eq:variance_v_l}. For a given $L>0$, the optimal number of samples per level, $M_l$, to ensure that 
	\[\frac{1}{M_0^{1/2}}+\sum_{l=1}^L\frac{\sqrt{V_l}}{M_l^{1/2}}\leq \tau,\]
	and minimizing the work \eqref{eq:work_mlmc},
	is given as
	\[M_0^{1/2}=\frac{1}{\tau}\left(1+\sum_{l=1}^L2^{(l(d+1))/3}V_l^{1/3}\right).\]
	and
	\[M_l^{1/2}=\frac{V_l^{1/6}\left(1+\sum_{l=1}^L2^{(l(d+1))/3}V_l^{1/3}\right)}{\tau 2^{(l(d+1))/3}}.\]
	Furthermore, if 
	\[V_l=\bigO(\Dx_l^{q}),\]
	then 
	\[\WorkMLMC{\{M_l\}_l}< \WorkMC{\tau^{-2}}{\Dx_L}\Leftrightarrow q>0.\]
	Setting $\tau=\bigO(\Dx_L^s)$ we get
	\[\WorkMLMC{\{M_l\}_l} = \bigO(\Dx_L^{-2s-d-1}/2^{qL}),\]
	and correspondingly
	\[\SpeedUp(\WorkMLMC{\{M_l\}_l}, \WorkMC{\Dx^{-2s}}{\Dx_L})=\bigO(2^{-qL}).\]
	
\end{theorem}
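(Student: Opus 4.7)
The plan is to treat the $M_l$ as continuous variables and apply Lagrange multipliers to the constrained optimization, and then to plug the resulting expressions into the work formula \eqref{eq:work_mlmc} for the asymptotic analysis.

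For the first part, I would minimize $\sum_{l=0}^L M_l c_l$ with $c_l := 2^{l(d+1)} \Dx_0^{-d-1}$ subject to the equality constraint
\[
\frac{1}{M_0^{1/2}} + \sum_{l=1}^L \frac{V_l^{1/2}}{M_l^{1/2}} = \tau,
\]
using a single Lagrange multiplier (equality at the minimum because both the objective and the constraint are strictly monotone in each $M_l$). Setting $V_0 := 1$ and differentiating the Lagrangian in each $M_l$ gives the stationarity conditions $M_l^{3/2} \propto V_l^{1/2}/c_l$, hence $M_l^{1/2} \propto V_l^{1/6}/c_l^{1/3}$. Substituting back into the constraint determines the proportionality constant as $\tau^{-1}\bigl(1 + \sum_{l=1}^L 2^{l(d+1)/3} V_l^{1/3}\bigr)$, which yields the claimed closed-form expressions for $M_0$ and $M_l$.

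For the second part, I would first compute the optimal work. Substituting the optimal $M_l$ into \eqref{eq:work_mlmc} telescopes cleanly to
\[
\WorkMLMC{\{M_l\}_l} = \bigO\!\left(\tau^{-2}\Bigl(\sum_{l=0}^L 2^{l(d+1)/3} V_l^{1/3}\Bigr)^3\right).
\]
Assuming $V_l = \bigO(\Dx_l^q)$, the summand equals $\bigO(2^{l(d+1-q)/3})$ up to a constant power of $\Dx_0$. For $q < d+1$ (the relevant regime) the sum is a geometric series dominated by its last term $2^{L(d+1-q)/3}$; cubing gives $\bigO(2^{L(d+1-q)}) = \bigO(\Dx_L^{q-d-1})$ after absorbing $\Dx_0$ constants. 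Plugging $\tau = \bigO(\Dx_L^s)$ gives the stated $\bigO(\Dx_L^{-2s-d-1}/2^{qL})$. Comparing with $\WorkMC{\Dx_L^{-2s}}{\Dx_L} = \bigO(\Dx_L^{-2s-d-1})$ yields the speedup formula and the equivalence $\WorkMLMC{\{M_l\}_l} < \WorkMC{\tau^{-2}}{\Dx_L} \Leftrightarrow q > 0$: for $q > 0$ the extra factor $2^{-qL}$ is genuinely small, whereas for $q \le 0$ the dominant $l=L$ term of the cubed sum already matches or exceeds the MC work.

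The calculation is essentially bookkeeping; the only delicate points are verifying that the Lagrangian critical point is a global minimum and that relaxing $M_l$ from $\N$ to $(0,\infty)$ is harmless. The former follows from the strict convexity of each $M_l \mapsto M_l^{-1/2}$, which makes the feasible set convex, combined with linearity of the work functional, so the unique critical point is the global minimizer. The latter is standard: rounding $M_l$ up to the next integer changes the work by at most a bounded factor per level, which is absorbed by the $\bigO$. The main obstacle is careful tracking of the $\Dx_0$ versus $\Dx_L$ factors in converting the geometric sum into a clean bound expressed in $\Dx_L$.
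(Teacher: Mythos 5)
Your proposal takes essentially the same route as the paper: a Lagrange-multiplier computation on the continuous relaxation giving $M_l^{1/2}\propto V_l^{1/6}2^{-l(d+1)/3}$, back-substitution into the (active) constraint to fix the constant, and then insertion of $V_l=\bigO(\Dx_l^q)$ into the resulting work $\tau^{-2}\Dx_0^{-d-1}\bigl(1+\sum_{l=1}^L 2^{l(d+1)/3}V_l^{1/3}\bigr)^3$ with the geometric-sum asymptotics to get the $2^{-qL}$ factor and the equivalence with $q>0$. Your extra justifications (convexity and global optimality of the critical point, activity of the constraint, integer rounding, and the explicit restriction to the regime $q<d+1$) are refinements the paper leaves implicit, but the argument is the same.
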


\begin{proof}
	In this proof, we let $A\simeq B$ denote $\bigO(A)=\bigO(B)$ and $A\prec B$ denote $\bigO(A)<\bigO(B)$.
	For a given $L>0$, we solve the following optimization problem
	\begin{equation}
	\begin{aligned}
	&\min \WorkMLMC{\{M_l\}_{l=0}^L}=\min \sum_{l=0}^L M_l2^{l(d+1)}\Dx_0^{-d-1}\\
	\text{s.t.} \qquad & \frac{1}{M_0^{1/2}}+\sum_{l=1}^L\frac{\sqrt{V_l}}{M_l^{1/2}}\leq \tau.
	\end{aligned}
	\end{equation}
	We use a Lagrange multiplier technique, and introduce the function
	\[H(M_0, \ldots, M_L, \lambda) = \sum_{l=0}^L M_l2^{l(d+1)}\Dx_0^{-d-1}+ \lambda \left (\frac{1}{M_0^{1/2}}+\sum_{l=1}^L\frac{\sqrt{V_l}}{M_l^{1/2}} - \tau \right). \]
	The extremal point must obey
	\[\pdpd{H}{\lambda}=\pdpd{H}{M_l} =0.\]
	We readily compute 
	\[
	\pdpd{H}{M_l} = \begin{cases}\Dx_0^{-d-1}-\lambda \frac{1}{2M_0^{3/2}} &\text {if } l=0 \\
	-\lambda\frac{\sqrt{V_l}}{2M_l^{3/2}}+
	2^{l(d+1)}\Dx_0^{-d-1}&\text{otherwise.}
	\end{cases}
	\]
	From $\pdpd{H}{M_0}=0$, we get 
	\[\lambda = \Dx_0^{-d-1}2M_0^{3/2}.\]
	For $l>0$ we solve for $\pdpd{H}{M_l}$ for $M_l$ to get
	\[M_l^{1/2}=\frac{V_l^{1/6}M_0^{1/2}}{2^{(l(d+1))/3}}.
	\]
	Solving $\pdpd{H}{\lambda}=0$ for $M_0$ gives us
	\[M_0^{1/2}=\frac{1}{\tau}\left(1+\sum_{l=1}^L2^{(l(d+1))/3}V_l^{1/3}\right).\]
	Inserting this for $M_l$ gives 
	\[M_l^{1/2}=\frac{V_l^{1/6}\left(1+\sum_{l=1}^L2^{(l(d+1))/3}V_l^{1/3}\right)}{\tau 2^{(l(d+1))/3}}.\]
	The work estimate is then
	\begin{align*}
	\WorkMLMC{\{M_l\}_{l=0}^L} \simeq \tau^{-2}\Dx_0^{-d-1}\left(1+\sum_{l=1}^L2^{(l(d+1))/3}V_l^{1/3}\right)^2\\ \left[1  + \sum_{l=1}^L V_l^{1/3}2^{l(d+1)/3}\right].
	\end{align*}
	For the last assertion, insert
	\[V_l=\Dx_l^q=\Dx_0^q2^{-lq},\]
	to see that
	\[\left(1+\sum_{l=1}^L2^{(l(d+1))/3}V_l^{1/3}\right)\simeq\sum_{l=1}^L V_l^{1/3}2^{l(d+1)/3} =\sum_{l=1}^L 2^{-lq/3}2^{l(d+1)/3} \simeq 2^{\left[(d+1-s)/3\right]L}.\]
	Therefore,
	\begin{align*}
	\WorkMLMC{\{M_l\}_{l=0}^L}&\simeq \tau^{-2}\Dx_0^{-d-1}2^{\left[(d+1-q)\right]L}\\&\prec \tau^{-2}\Dx_L^{-d-1}&\Leftrightarrow q>0\\
	&
	=\WorkMC{\tau^{-2}}{\Dx_L}.
	\end{align*}
	The last work estimate is then found by insertion.
\end{proof}

\subsection{Scalar conservation laws}
In the case of a scalar conservation law, we can appeal to the readily available sample convergence of the numerical scheme to produce an error estimate close to the MLMC error estimate found in \cite{mlmc_hyperbolic}. Here we assume that our scheme is able to reproduce the exact solution up to an order $s$. In other words, we assume
\begin{equation}
\label{eq:scalar_det_convergence}
\|u(\cdot, t)-\NumericalEvolution{\Dx}(u_0)(\cdot, t)\|_{L^1(\D)}\leq C\Dx^s,
\end{equation}
where $u$ is the unique, exact solution of \eqref{eq:hyperbolic_conservation_law}. For scalar conservation laws, such schemes are readily available, see for instance \cite{tecno} and \cite{hiltebrand_stdg}.

\begin{corollary}[MLMC for Scalar Conservation Laws]
\label{thm:mlmc_scalar}
Assume $N=1$, and let $\sigma\in \Y(\D,\Ph)$, and $\MLMC{\{M_l\}_{l=0}^L}{\Dx_0}{u_0}$ be generated by \algref{alg:mlmc}, let $g\in C_b(\Ph)\cap \text{Lip}(\Ph)$ and $\psi\in L^1(\D)\cap L^\infty(\D)$, and assume the numerical evolution operator satisfies \eqref{eq:scalar_det_convergence}, then 
\begin{multline}
\label{eq:mlmc_scalar_weak_error}
\StochasticLTwoNorm{\Ypair{\psi}{\Ypair{\nu-\MLMC{\{M_l\}_{l=0}^L}{\Dx_0}{u_0}}{g}}}\leq\left|\Ypair{\psi}{\Ypair{\nu-\nu^{\Dx_L}}{g}}\right|\\+C_1 \frac{\sqrt{\Var(\Ypair{\psi}{g(u^{0})})}}{M_0^{1/2}}+C_2\sum_{l=1}^L\frac{\|\psi\|_{L^\infty(\D)}\|g\|_{\text{Lip}}\Dx_l^s}{M_l^{1/2}}.
\end{multline}
\end{corollary}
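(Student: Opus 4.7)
The plan is to deduce this as a corollary of Theorem \ref{thm:weak_mlmc} by replacing the abstract variance terms $\Var(\Ypair{\psi}{g(u^{l})-g(u^{l-1})})$ with concrete bounds that exploit the well-posedness of scalar conservation laws. The first summand (the spatial error) and the coarse-level variance term will be carried verbatim from Theorem \ref{thm:weak_mlmc}; only the differences across levels need new work. The key leverage is that for $N=1$ the entropy solution $u(\omega,\cdot,\cdot)$ of \eqref{eq:hyperbolic_conservation_law} with initial datum $u_0(\omega,\cdot)$ is unique for almost every $\omega$, so both $u^l(\omega)$ and $u^{l-1}(\omega)$ can be compared pathwise to this common reference.

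First I would fix a sample $\omega$ and bound the pathwise quantity
\[
\left|\Ypair{\psi}{g(u^{l}(\omega))-g(u^{l-1}(\omega))}\right|
\leq \|\psi\|_{L^\infty(\D)}\,\|g\|_{\Lip}\,\|u^{l}(\omega)-u^{l-1}(\omega)\|_{L^1(\DT)},
\]
using Hölder's inequality followed by the Lipschitz property of $g$. Next I insert the exact entropy solution $u(\omega)$ via the triangle inequality
\[
\|u^{l}(\omega)-u^{l-1}(\omega)\|_{L^1(\DT)}
\leq \|u^{l}(\omega)-u(\omega)\|_{L^1(\DT)}+\|u(\omega)-u^{l-1}(\omega)\|_{L^1(\DT)},
\]
and apply the assumed deterministic rate \eqref{eq:scalar_det_convergence} to each term, yielding a bound of the form $C T(1+2^s)\Dx_l^s$ after using $\Dx_{l-1}=2\Dx_l$. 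Because this bound is uniform in $\omega$, it immediately gives a deterministic bound on the integrand, hence on its second moment.

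Using $\Var(X)\leq \E(X^2)$, I then obtain
\[
\sqrt{\Var(\Ypair{\psi}{g(u^{l})-g(u^{l-1})})}
\leq C_2\,\|\psi\|_{L^\infty(\D)}\,\|g\|_{\Lip}\,\Dx_l^s,
\]
with $C_2$ absorbing $T$ and the constant from \eqref{eq:scalar_det_convergence}. Substituting this estimate for each $l\geq 1$ into the conclusion of Theorem \ref{thm:weak_mlmc}, while leaving the level-$0$ variance term with an absolute constant $C_1$, produces \eqref{eq:mlmc_scalar_weak_error}.

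The only conceptual obstacle is the invocation of the deterministic $L^1$-convergence rate \eqref{eq:scalar_det_convergence}, which is precisely where the scalar assumption $N=1$ enters: without a well-posed pathwise solution $u(\omega)$ there is no reference to compare $u^l$ and $u^{l-1}$ to, and the pathwise Lipschitz estimate collapses. Everything else is a direct manipulation of $\Var\leq \E(\cdot^2)$, Hölder, the Lipschitz property of $g$, and the geometric scaling $\Dx_{l-1}=2\Dx_l$.
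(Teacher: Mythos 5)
Your proposal is correct and follows essentially the same route as the paper's own proof: reduce to bounding $\Var(\Ypair{\psi}{g(u^{l})-g(u^{l-1})})$ via \Cref{thm:weak_mlmc}, estimate it pathwise by $\|\psi\|_{L^\infty}\|g\|_{\Lip}\|u^l-u^{l-1}\|_{L^1}$, insert the exact entropy solution via the triangle inequality together with \eqref{eq:scalar_det_convergence}, and conclude with $\Var(X)\leq\E(X^2)$. The only difference is that you spell out the constant tracking (the factor from $\Dx_{l-1}=2\Dx_l$) slightly more explicitly, which the paper absorbs into $C$.
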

\begin{proof}
By \Cref{thm:weak_mlmc}, the only thing we need to show is
\[\Var(\Ypair{\psi}{g(u^{l})-g(u^{l-1})})\leq C\|\psi\|^2_{L^\infty(\D)}\|g\|^2_{\text{Lip}}\Dx_l^{2s}.\]
Since each sample will converge, we readily estimate
\[\int_{\DTT}\psi(x,t)(g(u^{l})-g(u^{l-1}))\dd x \dd t\leq \|\psi\|_{L^\infty(\DTT)}\|g\|_{\Lip}\|u^{l}-u^{l-1}\|_{L^1(\DTT)}.\]
Owing to \eqref{eq:scalar_det_convergence} and the triangle inequality, we see that
\[\|\psi\|_{L^\infty(\DTT)}\|g\|_{\Lip}\|u^{l}-u^{l-1}\|_{L^1(\DTT)}\leq C\|\psi\|_{L^\infty(\DTT)}\|g\|_{\Lip}\Dx_l^s.\]
Now, we easily obtain
\begin{align*}
\Var(\Ypair{\psi}{g(u^{l})-g(u^{l-1})})&=\E(\Ypair{\psi}{g(u^{l})-g(u^{l-1})}^2)-\E(\Ypair{\psi}{g(u^{l})-g(u^{l-1})})^2\\
&\leq\int_\Omega C\|\psi\|^2_{L^\infty(\DTT)}\|g\|^2_{\Lip}\Dx_l^{2s}\dd \Prb(\omega)\\
&=C\|\psi\|^2_{L^\infty(\DTT)}\|g\|^2_{\Lip}\Dx_l^{2s},
\end{align*}
taking square roots gives the claim.
\end{proof}
\begin{remark}
	In the above theorem, we had to put restrictions on $g$ and $\psi$. This is only needed to give known bounds on $V_l$. Indeed, owing to the dominted convergence theorem, $V_l\to 0$ for any $g\in C_b(\Ph)$ and $\psi \in L^1(\DT)$, but not with a necessarily with a computeable decay rate.
\end{remark}
\subsection{Systems of conservation laws}

For systems of conservation laws, we can not appeal to any convergence result for a numerical scheme as we did for scalar conservation laws. However, we can measure the decay of the variance between the levels, $V_l$, numerically. 

\section{Numerical experiments}
\label{sec:numerical_experiments}
We perform numerical experiments to assess the applicability of MLMC for entropy measure valued solutions.
\subsection{Scalar conservation laws}
Owing to \eqref{eq:mlmc_scalar_weak_error} and the optimal work estimates derived in \Cref{thm:number_of_samples}, in the scalar case we already expect that the MLMC method will provide a speedup compared to ordinary Monte-Carlo.

In this subsection, we consider the Burgers equation in one space dimension, given here as
\begin{equation}                                                                                                                                                      
\label{eq:burgers}                                                                                                                                                    
u_t+\left[\frac{u^2}{2}\right]_x=0.                                                                                                                                             
\end{equation}
Here $u:\R\times \TD \to \R$ is the unknown. We consider the initial data
\begin{equation}
\label{eq:init_dirac_burgers}
u_0(x,\omega)=\begin{cases}1& x <1/2+\epsilon X(\omega)\\
0 & \text{otherwise}\end{cases}\qquad x \in[0,1],
\end{equation}
where $X$ is uniformly distributed on $[-0.5, 0.5]$. We pick the number of samples in accordance with \Cref{thm:number_of_samples} and \Cref{thm:mlmc_scalar}. Here $s\approx 1$.

We measure the convergence against the Dirac solution when $\epsilon\to 0$, given as $\delta_{u(x,t)}$, where 
\[u(x,t) = \begin{cases}1& x <1/2 + t\\
0 & \text{otherwise}\end{cases}\qquad x \in[0,1],\]
and we simulate to $t=0.1$. The results are shown in \Cref{fig:burgers_dirac}. As is expected, the convergence rate of the MLMC algorithm is linear with respect to the runtime, while the Monte-Carlo algorithm scales as $\bigO(\epsilon^{-3})$.

\begin{figure}[h!]
	
	\includegraphics[width=\textwidth]{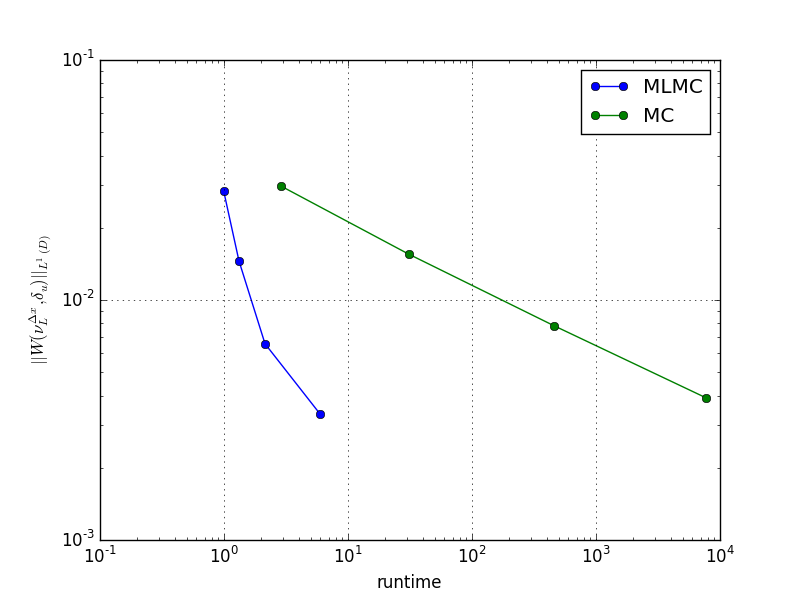}
	\caption{\label{fig:burgers_dirac}Wasserstein convergence, comparing MC with MLMC, with initial data \eqref{eq:init_dirac_burgers}}.
\end{figure} 

\begin{figure}[h!]
	\begin{subfigure}[b]{0.45\textwidth}
		\includegraphics[width=\textwidth]{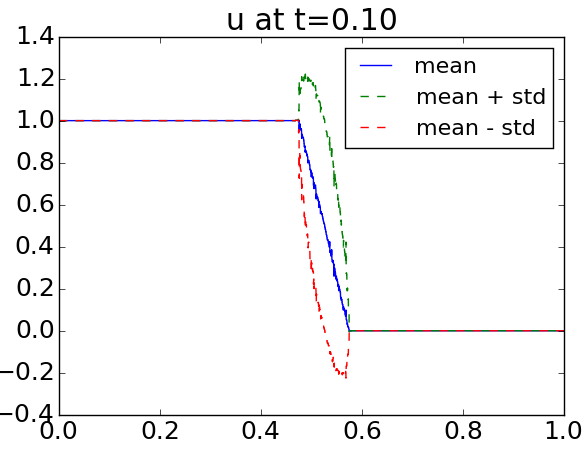}
		\caption{Variance and mean computed by MLMC.}
	\end{subfigure}
	\begin{subfigure}[b]{0.45\textwidth}
		\includegraphics[width=\textwidth]{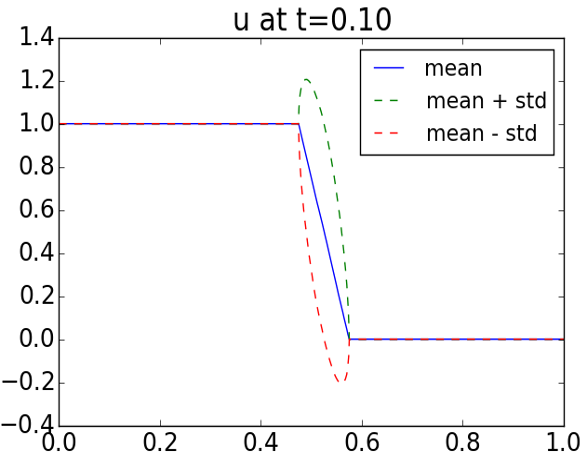}
		\caption{Variance and mean from reference solution.}
	\end{subfigure}
	\caption{MLMC computed results for the Riemann problem \eqref{eq:init_dirac_burgers}.}
\end{figure}

\subsection{System of conservation laws}

We consider the Euler equations, given here as
\begin{equation}                                                                                                                                                      
\pdpd{}{t}\begin{pmatrix}                                                                                                                                               
\rho\\                                                                                                                                                                
\rho w_x\\                                                                                                                                                            
\rho w_y\\                                                                                                                                                            
\rho E                                                                                                                                                                
\end{pmatrix}                                                                                                                                                         
+                                                                                                                                                                     
\pdpd{}{x_1}                                                                                                                                                            
\begin{pmatrix}                                                                                                                                                       
\rho w_x\\                                                                                                                                                            
\rho w_x^2+p\\                                                                                                                                                        
\rho w_yw_y\\                                                                                                                                                         
\rho (E+p)w_x                                                                                                                                                         
\end{pmatrix}                                                                                                                                                         
+                                                                                                                                                                     
\pdpd{}{x_2}                                                                                                                                                            
\begin{pmatrix}                                                                                                                                                       
\rho w_y\\                                                                                                                                                            
\rho w_xw_y\\                                                                                                                                                         
\rho w_y^2+p\\                                                                                                                                                        
\rho (E+p)w_y                                                                                                                                                         
\end{pmatrix}                                                                                                                                                         
=0.                                                                                                                                                                   
\end{equation}
Here the pressure $p$, the density $\rho$, the total energy $E$ and the velocity field $(w^x, w^y)$ are related through
\[E=\frac{p}{\gamma-1}+\frac{\rho\left(w_x^2+w_y^2\right)}{2},\]
where $\gamma$ is the adiabatic constant, which we set to $1.4$.
\subsubsection{Shockvortex interaction}
We consider the initial data
\begin{equation}
\label{eq:init_sv}
u_0(\omega, x)=\begin{cases}
u_L&x_1<I\\
u_R&\text{otherwise.}\end{cases}\qquad x\in [0,1]^2
\end{equation}
with $\rho_L=2$, $\rho_R=1/1.1$, 
\[w^x_L=\sqrt{\gamma}+\delta b \exp(\alpha(1 - b^2)) \sin(\theta),\]
\[w^y_L=\sqrt{\gamma}-\delta b \exp(\alpha(1 - b^2)) \cos(\theta),\]
\[p_L= 1 - (\gamma - 1) \delta^2 \frac{ \exp(2 \alpha(1 - b^2))}{ (4 \alpha \gamma)}\rho_L\]
$w^x_L=1.1\sqrt{\gamma}$,  $w^y_R=0$ and $p_R=1-0.1\gamma$. Here 
\[b= \frac{\sqrt{(x-0.25)^2 + (y-0.5)^2}}{0.05}\]
and $\theta$ is the angle between the $x$-axis and the line spanned by $(x-0.25, y-0.5)$. We set $\delta=0.3$. In addition, we perturb the interfaces $I$  by setting
\[I=0.5+\epsilon Y(x,\omega)\]
where $\epsilon>0$ will be a parameter to the simulation, and
\[Y(x, \omega)=\sum_{n=1}^m a^n(\omega)\cos(b^n(\omega)+2n\pi x_2).\]
We simulate to $T=0.35$. In the simulation, we set $m=10$ and $\epsilon=0.1$.

In lieu of \eqref{eq:mlmc_sharp_stochastic} and \Cref{thm:number_of_samples}, the MLMC algorithm will only give a computational speed up compared to Monte-Carlo if the variance between the samples decays with $\Dx^q$ for some $q>0$, in other words if
\[\Var(\Ypair{\psi}{g(u^l)-g(u^{l-1})})\leq C\Dx^q.\]

To measure the decay rate of the variance between the samples, we do a regular Monte-Carlo simulation to measure $V_l$. Concretely, we approximate
\begin{equation}
\label{eq:mc_variance_approx}
V_l\approx \frac{1}{M}\sum_{k=1}^M\left(\Ypair{\psi}{g(u^l_k)-g(u^{l-1}_k)}\right)^2-\left(\frac{1}{M}\sum_{k=1}^M\left(\Ypair{\psi}{g(u^l_k)-g(u^{l-1}_k)}\right)\right)^2.
\end{equation}

We display the result in \Cref{fig:sv_variance_levels}. In this case the variance actually decays with the levels, and the decay rate is close to $1$. Therefore, it is expected that the MLMC method works. We pick the number of samples per level in accordance  with the decay rate and \Cref{thm:number_of_samples}.

To verify the assertion in the previous paragraph, we perform numerical experiments with MLMC and regular single level Monte-Carlo. We compute a reference solution at resolution $1024\times 1024$ using $1000$ samples. In \Cref{fig:kh_mean_mlmc_compare}, we compare the errors of single level Monte-Carlo to that of multilevel Monte-Carlo using a varying amount of samples at the finest level. Our claims are confirmed in \Cref{fig:sv_mlmc_error}. As we can see, the MLMC method starts of with a low error even with a low number of samples on the highest level. With a higher number of samples, the Monte-Carlo algorithm eventually beats the MLMC algorithm, as is expected.

In \Cref{fig:sv_numerical} we display the results of the computation. As is clear, the MLMC algorithm works well for this initial data, since we actually do observe decay in the variance.
\begin{figure}[h!]
	
	\includegraphics[width=0.6\textwidth]{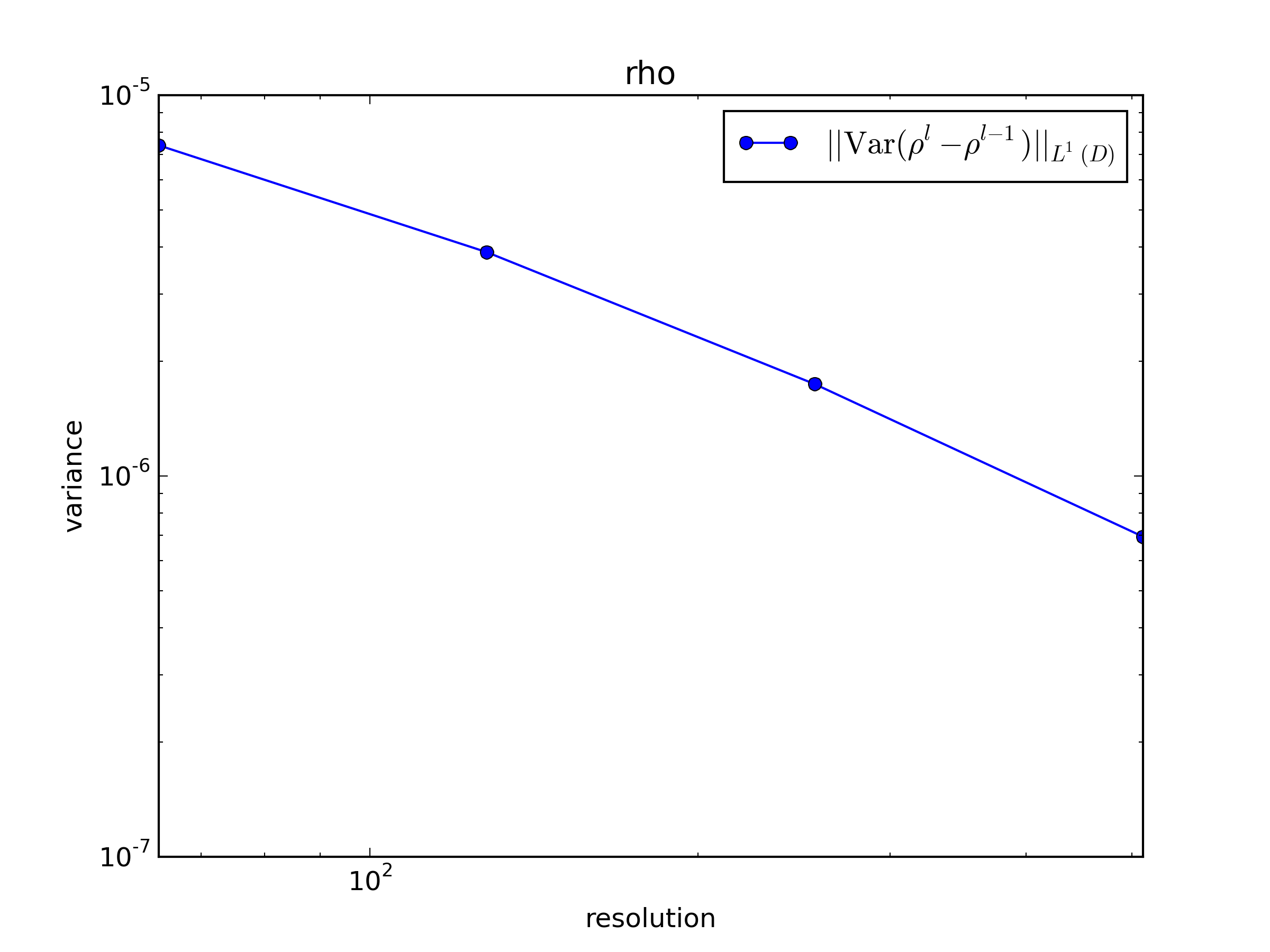}
	\caption{Measured $V_l$, the variance across levels, for the shock vortex simulation \eqref{eq:init_sv}  at time $T=0.35$.}
	
	\label{fig:sv_variance_levels}
\end{figure}

\begin{figure}[h!]
	\includegraphics[width=0.6\textwidth]{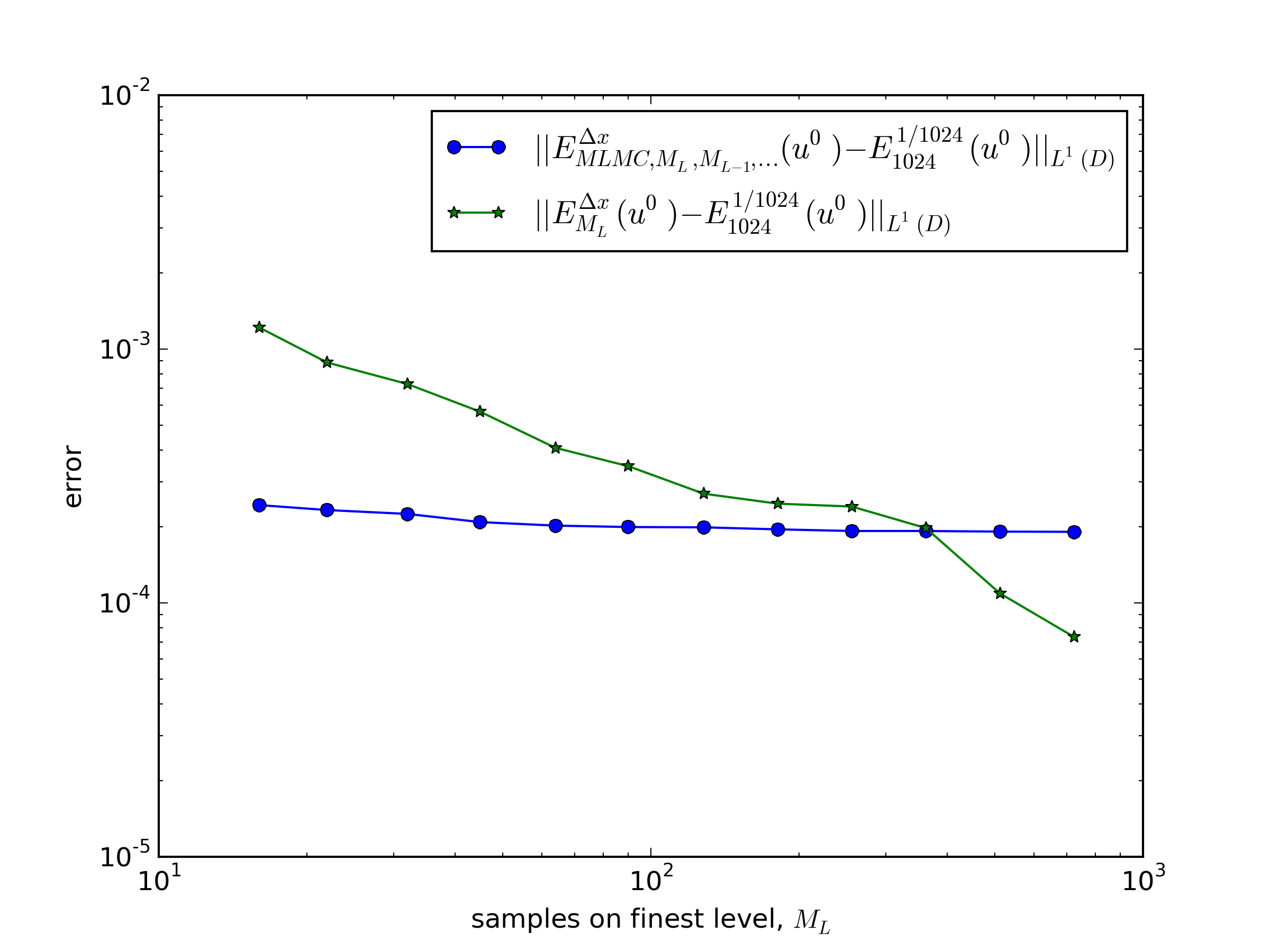}	
	\caption{Comparison of error of MLMC and MC for the shock vortex initial data \eqref{eq:init_sv} at time $T=0.35$.}
	
	\label{fig:sv_mlmc_error}
\end{figure}
\begin{figure}[h!]
	\begin{subfigure}[b]{0.45\textwidth}
		\includegraphics[width=\textwidth]{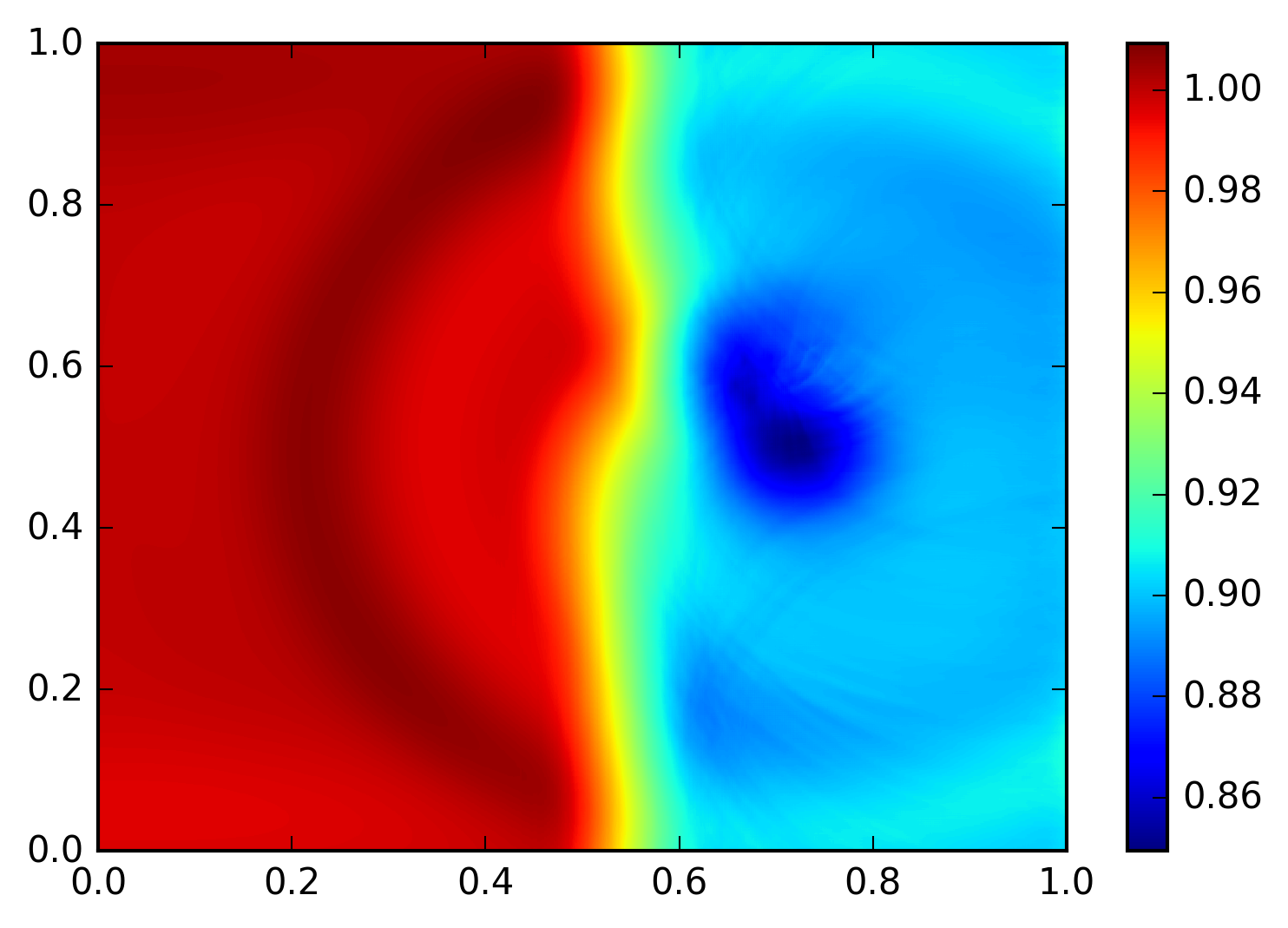}
		\caption{Mean from MLMC.}
	\end{subfigure}
	\begin{subfigure}[b]{0.45\textwidth}
		\includegraphics[width=\textwidth]{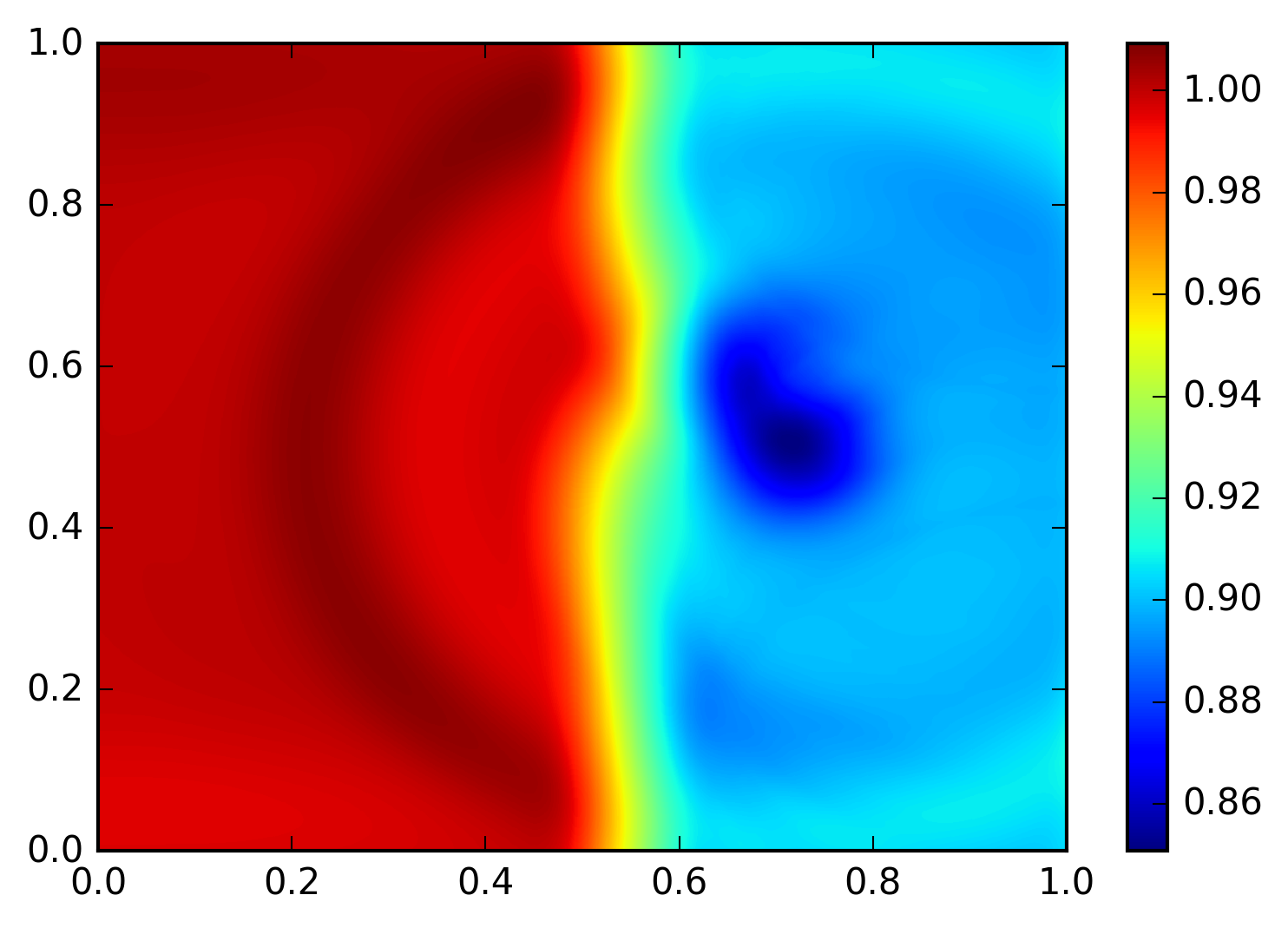}
		\caption{Mean from reference solution.}
	\end{subfigure}
	
	\begin{subfigure}[b]{0.45\textwidth}
		\includegraphics[width=\textwidth]{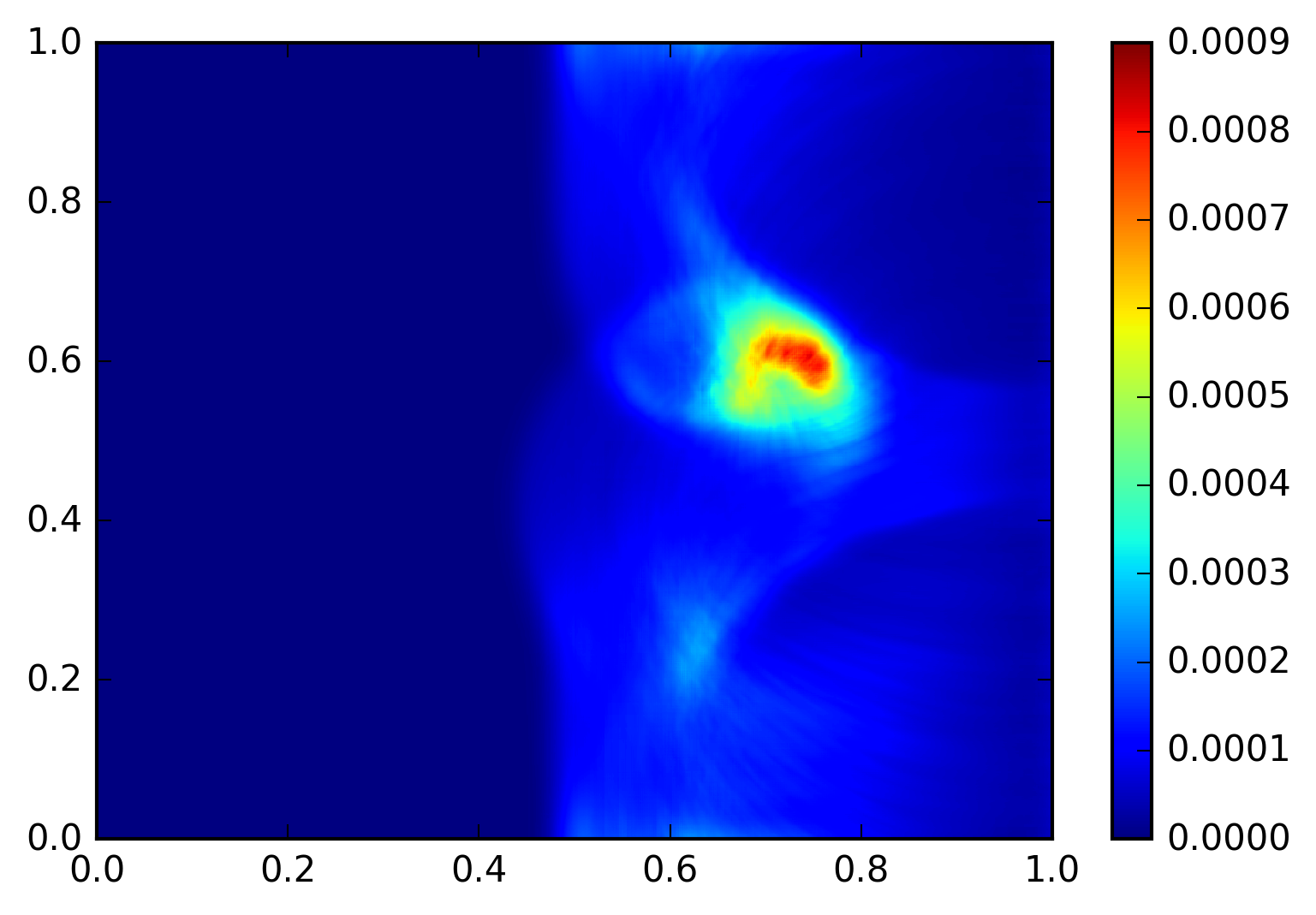}
		\caption{Variance from MLMC.}
	\end{subfigure}
	\begin{subfigure}[b]{0.45\textwidth}
		\includegraphics[width=\textwidth]{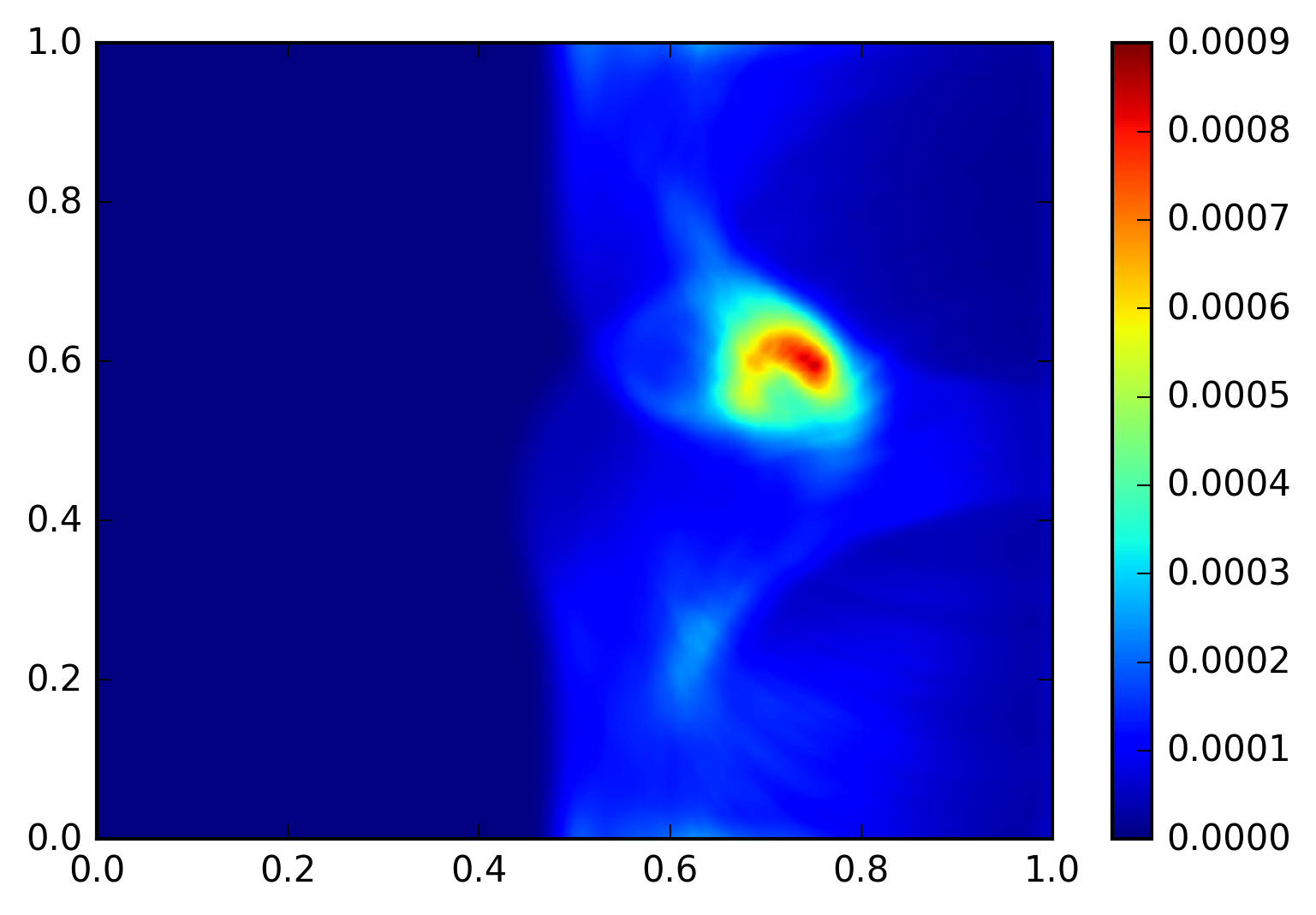}
		\caption{Variance from reference solution.}
	\end{subfigure}
	\caption{Comparison of the singlelevel Monte-Carlo algorithm against the MLMC algorithm for the shockvortex interaction initial data \eqref{eq:init_sv}  at time $T=0.035$. We used $M_L=16$ samples on the finest level for the MLMC computation..}
	\label{fig:sv_numerical}
\end{figure}

\subsubsection{Kelvin-Helmholtz initial data}

We use the initial data 
\begin{equation}
\label{eq:init_kh}
u_0(\omega, x)=\begin{cases}
u_L&I_1<x_2<I_2\\
u_R&\text{otherwise.}\end{cases}\qquad x\in [0,1]^2
\end{equation}
with $\rho_L=2$, $\rho_R=1$, $w^x_L=-0.5$, $w^x_R=0.5$, $w^y_L=w^y_R=0$ and $p_L=p_R=2.5$. In addition, we perturb the interfaces $I_1$ and $I_2$ by setting
\[I_j=J_j+\epsilon Y_j(x,\omega)\qquad j=1,2\]
where $\epsilon>0$ will be a parameter to the simulation, and
\[Y_j(x, \omega)=\sum_{n=1}^m a_j^n(\omega)\cos(b_j^n(\omega)+2n\pi x_1)\qquad j=1,2\]
for uniformly distributed random variables $a_j^n:\Omega\to [0,1]$ and $b_j^n:\Omega\to [0,2\pi]$. In \cite{fkmt}, numerical experiments indicated that no relevant numerical scheme was able to obtain sample convergence for this initial data. However, the FKMT algorithm did produce a numerical approximation that converged. We simulate to $T=2$. In the simulation, we set $m=10$ and $\epsilon=0.1$. We simulate using a 3-wave HLL solver~\cite{Toro1994} with third order WENO reconstruction~\cite{WENO}.

In \Cref{fig:kh_variance_levels}, we plot the numerically computed variance. What is immediately clear from the plot, is that the variance does not decrease in any significant way. Hence, we can not expect that MLMC will improve upon Monte-Carlo. There is also no observed variance decay for the functionals $\Ypair{\psi}{g(u^l)}$ for $\psi=1$ and $g$ being set as an $k$-order Legendre polynomial, for $k>1$.

To verify the assertion in the previous paragraph, we perform numerical experiments with MLMC and regular single level Monte-Carlo. We compute a reference solution at resolution $2048\times 2048$ using $2000$ samples. In \Cref{fig:kh_mean_mlmc_compare}, we compare the errors of single level Monte-Carlo to that of multilevel Monte-Carlo using a varying amount of samples at the finest level. The figure clearly shows that the MLMC algorithm, even with more work performed than the Monte-Carlo algorithm, is no better than the Monte-Carlo algorithm. Plots of the numerical results are shown in \Cref{fig:kh_numerical}. As is clear from the figures and theory, MLMC can not give a speed up compared to MC for the unstable Kelvin-Helmholtz initial data.
\subsubsection{MLMC with relaxation}
In the case of the Kelvin-Helmholtz equation, we do observe sample convergence for small times. That is, for $0\leq t\leq T_0=0.05$, we observe 
\begin{equation}
\label{eq:kh_short_sample_conv}
\|u^{\Dx_l}(\cdot, t)-u^{\Dx_{l-1}}(\cdot,t)\|_{L^1(\D, \Ph)}\leq C\Dx^s,
\end{equation}
for some $s\approx 1$. We can exploit this to try to correct the MLMC method by introducing a so-called relaxation time, described here. We fix $T_0\approx 0.05$, and then we reset the coarse samples with the fine samples for every $t=nT_0$. In other words, we run the simulation between $t=(n-1)T_0$ and $t=nT_0$, then we reset the coarse samples by
\[u^{\Dx_{l-1}}_k(\omega, x,nT_0)=u^{\Dx_{l}}_k(\omega, x,nT_0).\]

Since we observe short time sample convergence, this  guarantees that 
\[\Var\left(\Ypair{\psi}{g(u^l)-g(u^{l-1})}\right)\leq C\Dx^{2s},\]
as illustrated in \Cref{fig:mlmc_variance_stabilization}.
However, by resetting the coarse samples, we introduce an error term of the form
\[\sum_{l=0}^{L-1}|\langle \psi, \langle \nu^{\Dx_{l}} - \nu^{\Dx_{l-1}}, g\rangle\rangle|=\mathcal{O}(\Dx_0^s).\]
The error term is independent of the number of samples on each level, and scales as the coarsest resolution. This can clearly be seen in \Cref{fig:mlmc_vs_mc_samples_stabilization}. Also with the relaxation time, the MLMC is outperformed by the Monte-Carlo algorithm. The plots are shown in \Cref{fig:kh_numerical_stab}.

\begin{figure}[h!]
\includegraphics[width=0.6\textwidth]{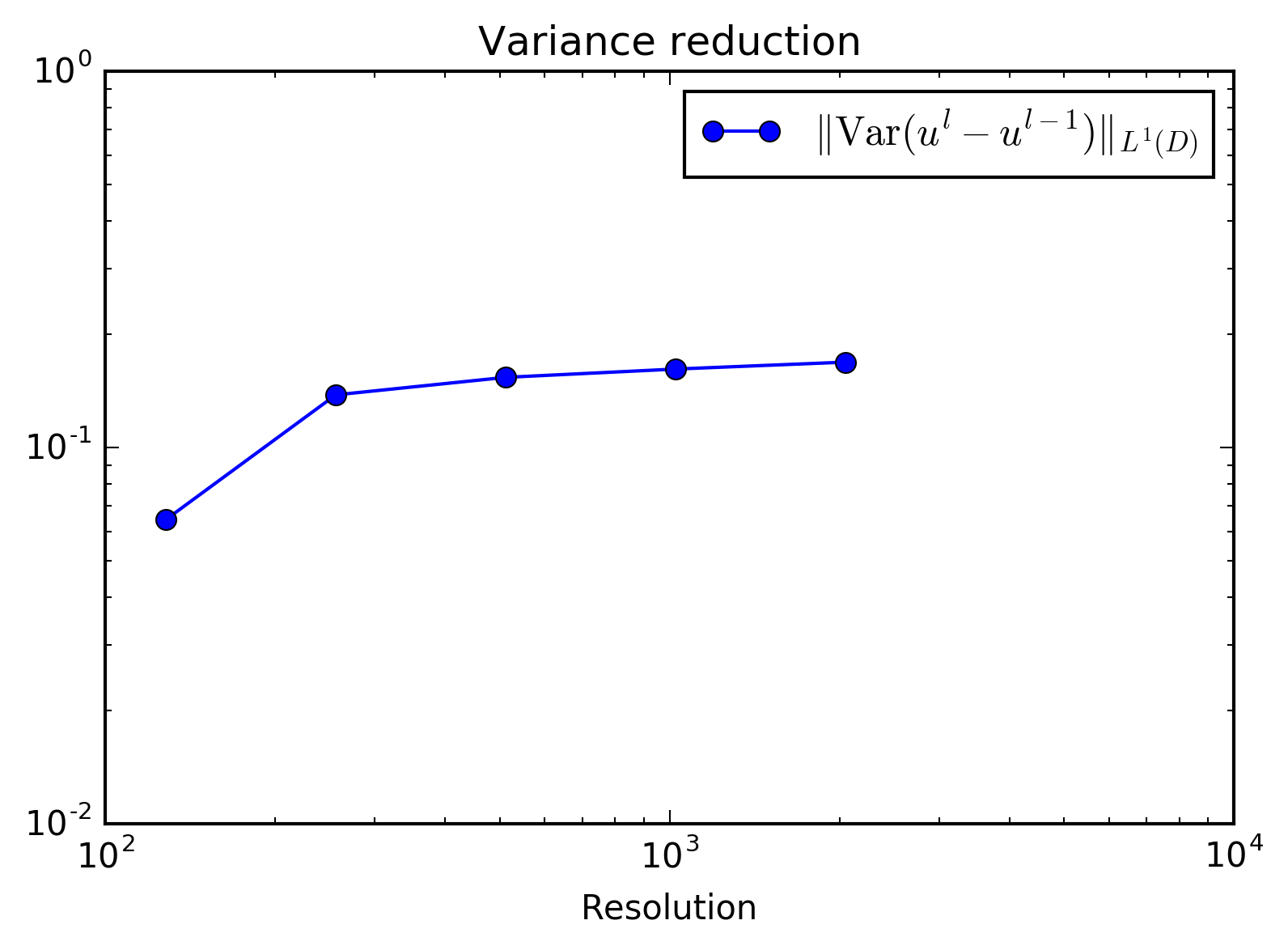}
\caption{Numerically approximated variance between levels,$\Var(\Ypair{\psi}{g(u^l)-g(u^{l-1})})$ for the Kelvin-Helmholtz initial data \eqref{eq:init_kh} at time $T=2$.}
\label{fig:kh_variance_levels}
\end{figure}

\begin{figure}[h!]
\includegraphics[width=0.6\textwidth]{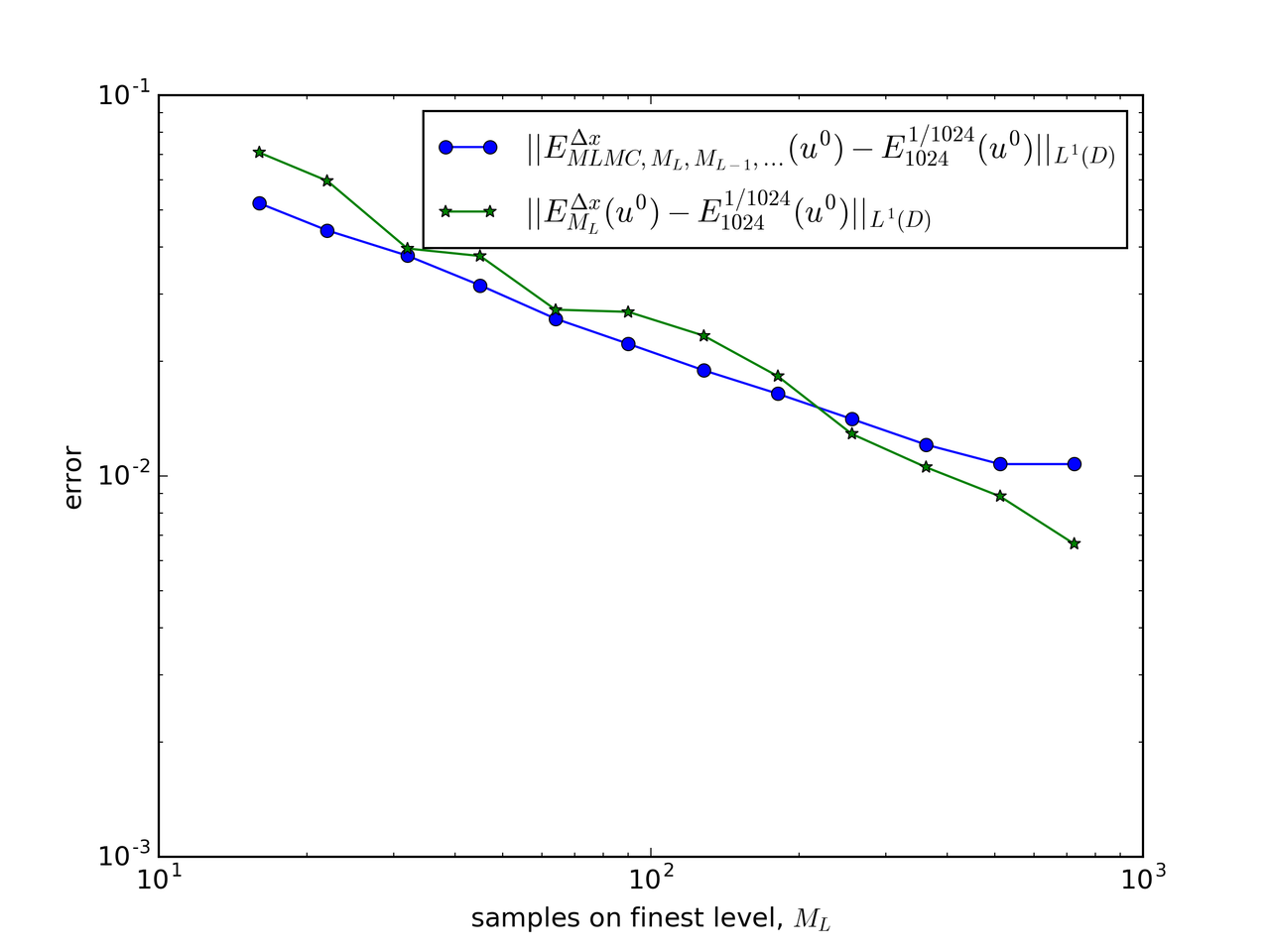}
\caption{Error in computed mean for Kelvin-Helmholtz initial data at time $T=2$.}
\label{fig:kh_mean_mlmc_compare}
\end{figure}

\begin{figure}[h!]
\begin{subfigure}[b]{0.45\textwidth}
\includegraphics[width=\textwidth]{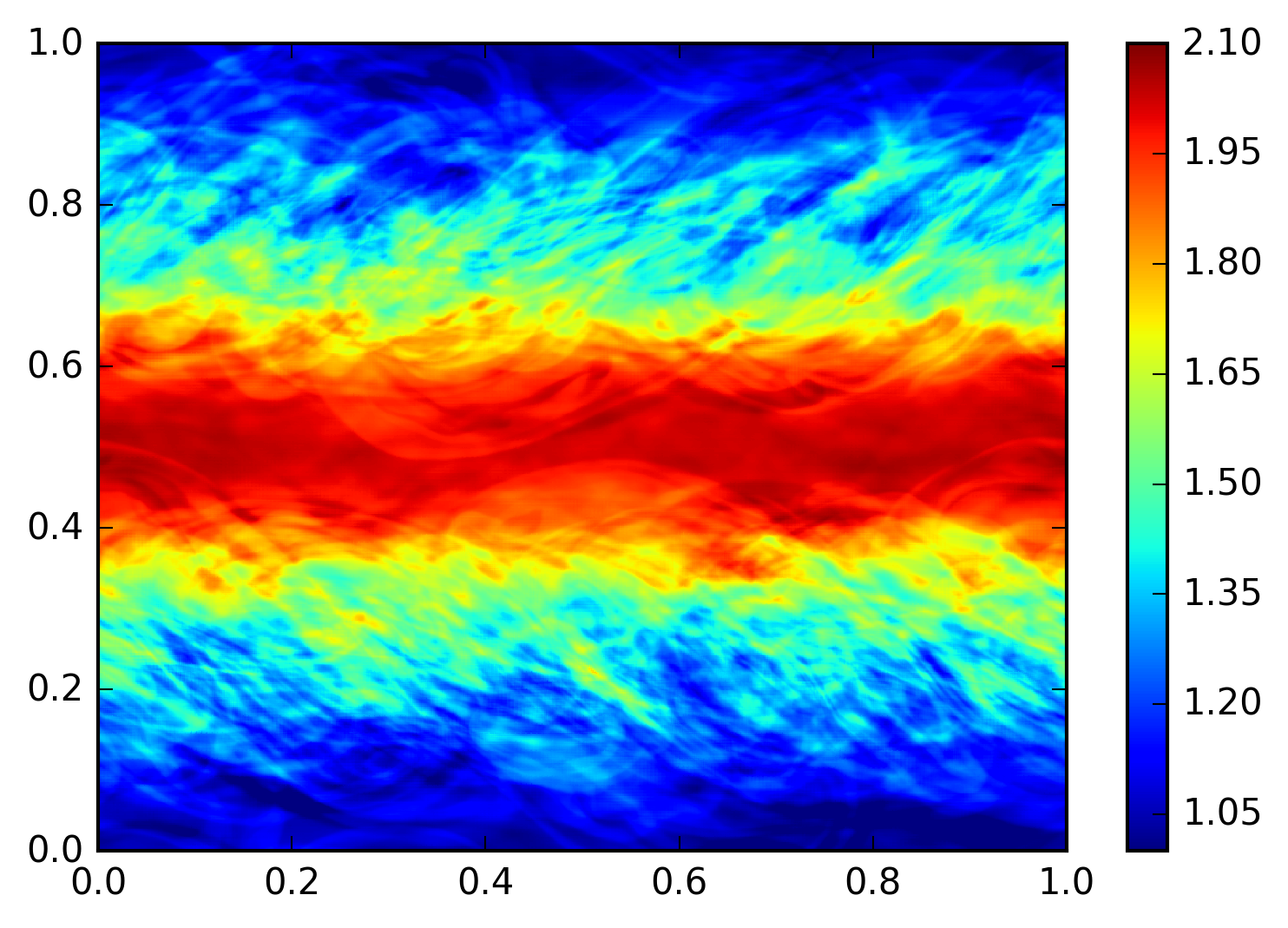}
\caption{Mean from MLMC.}
\end{subfigure}
\begin{subfigure}[b]{0.45\textwidth}
\includegraphics[width=\textwidth]{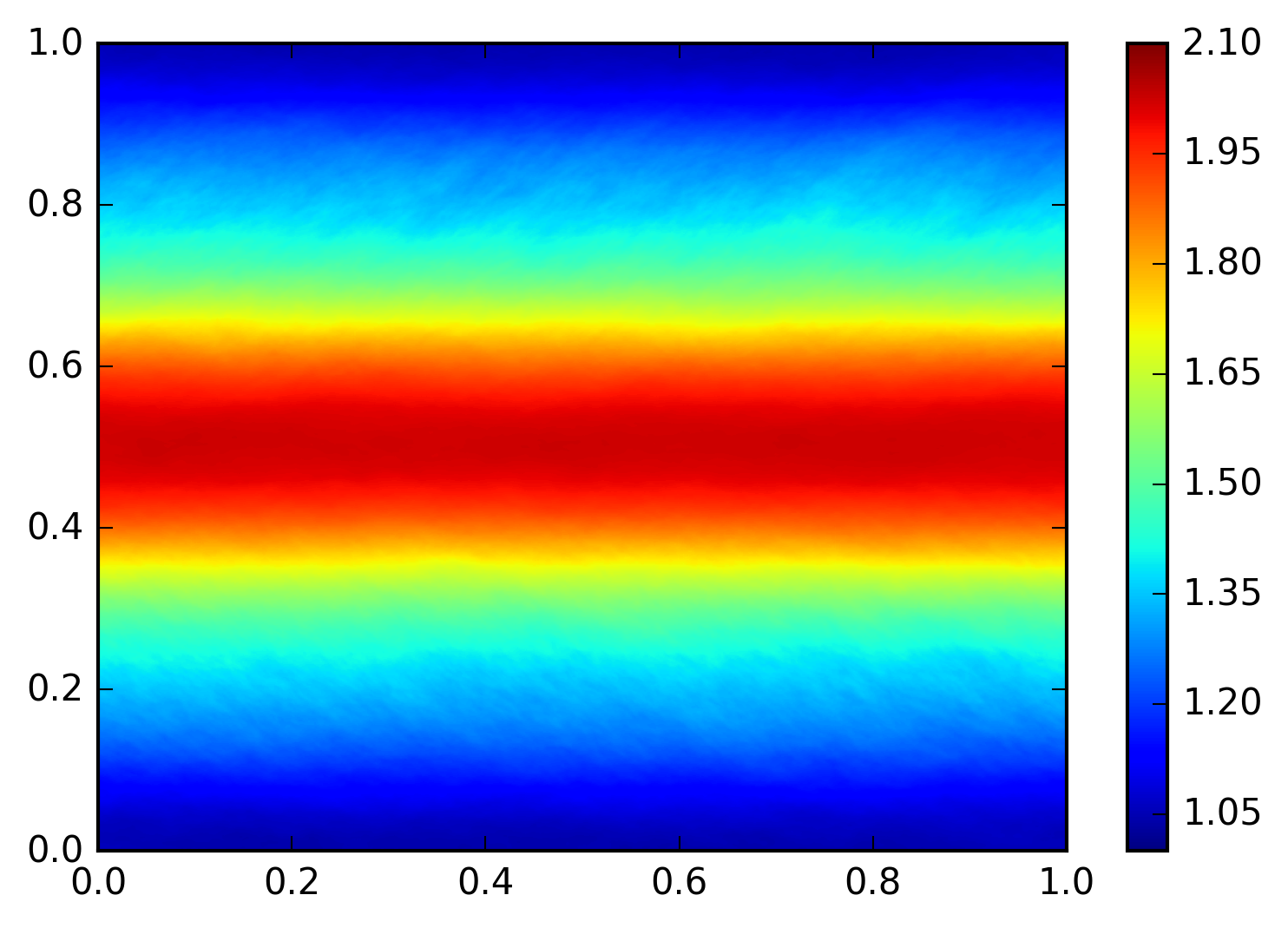}
\caption{Mean from reference solution.}
\end{subfigure}

\begin{subfigure}[b]{0.45\textwidth}
\includegraphics[width=\textwidth]{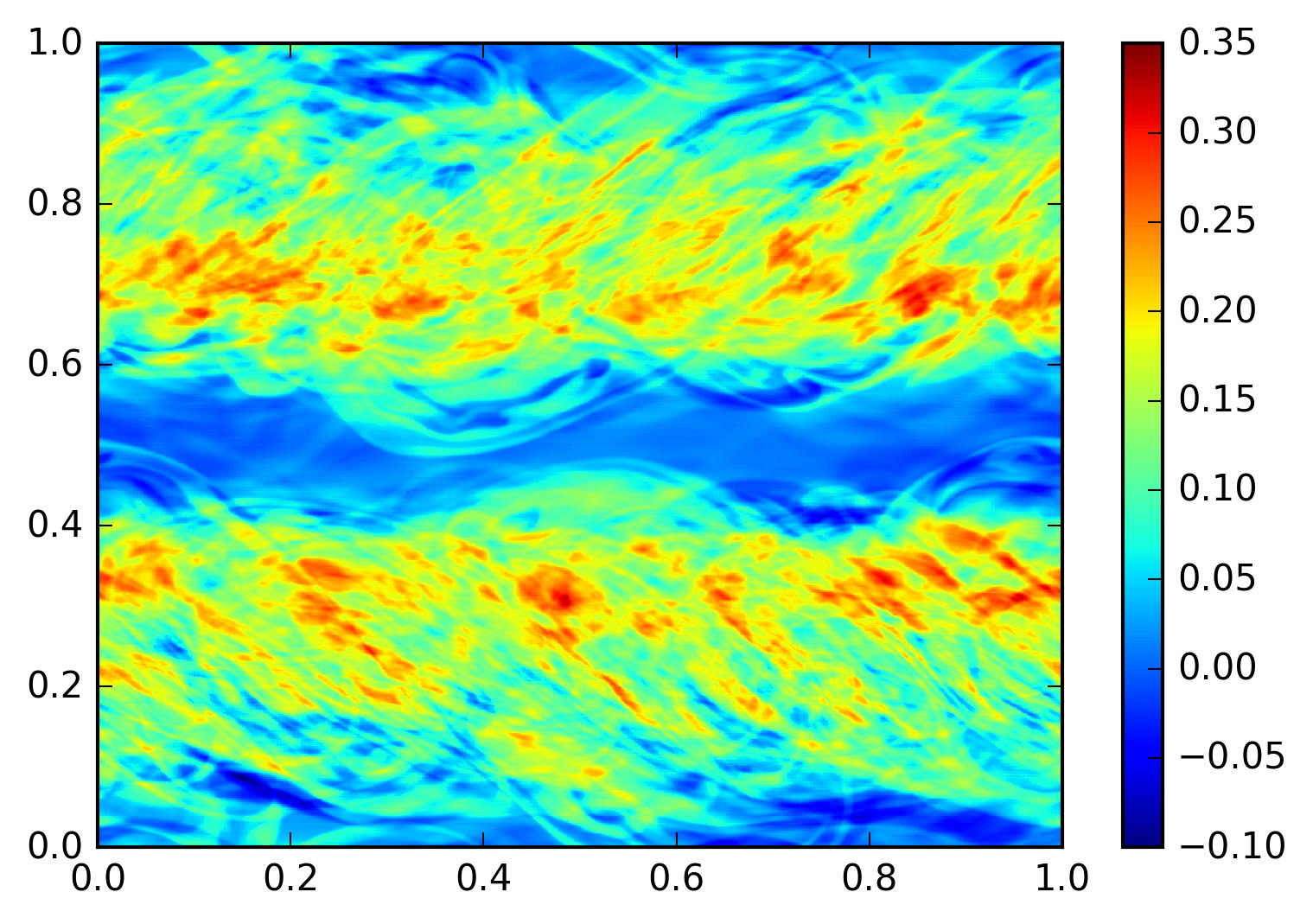}
\caption{Variance from MLMC.}
\end{subfigure}
\begin{subfigure}[b]{0.45\textwidth}
\includegraphics[width=\textwidth]{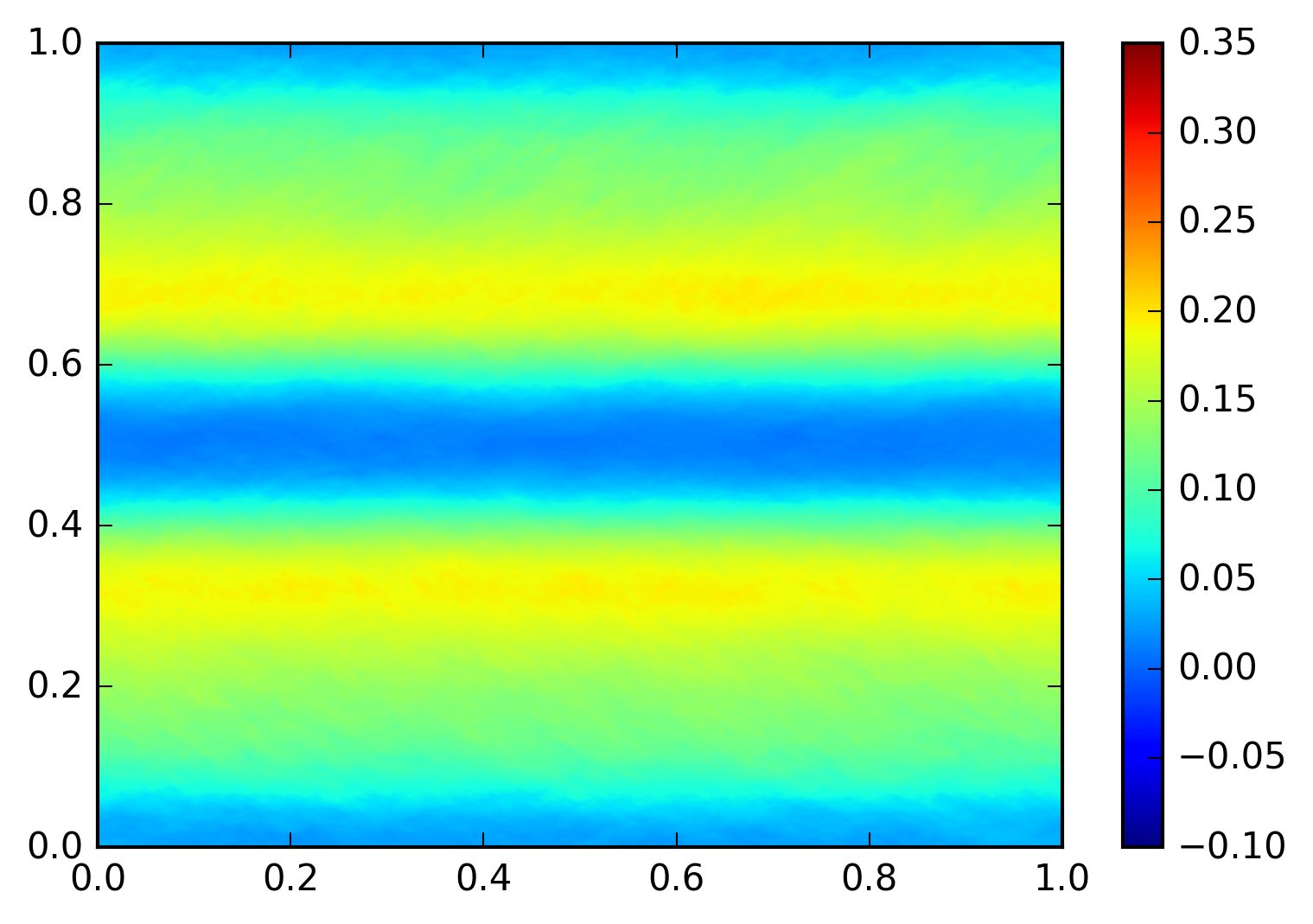}
\caption{Variance from reference solution.}
\end{subfigure}
\caption{Comparison of the singlelevel Monte-Carlo algorithm against the MLMC algorithm for initial data \eqref{eq:init_kh} at time $T=2$. We used $M_L=16$ samples on the finest level for the MLMC computation.}
\label{fig:kh_numerical}
\end{figure}

\begin{figure}[h!]
	\includegraphics[width=0.6\textwidth]{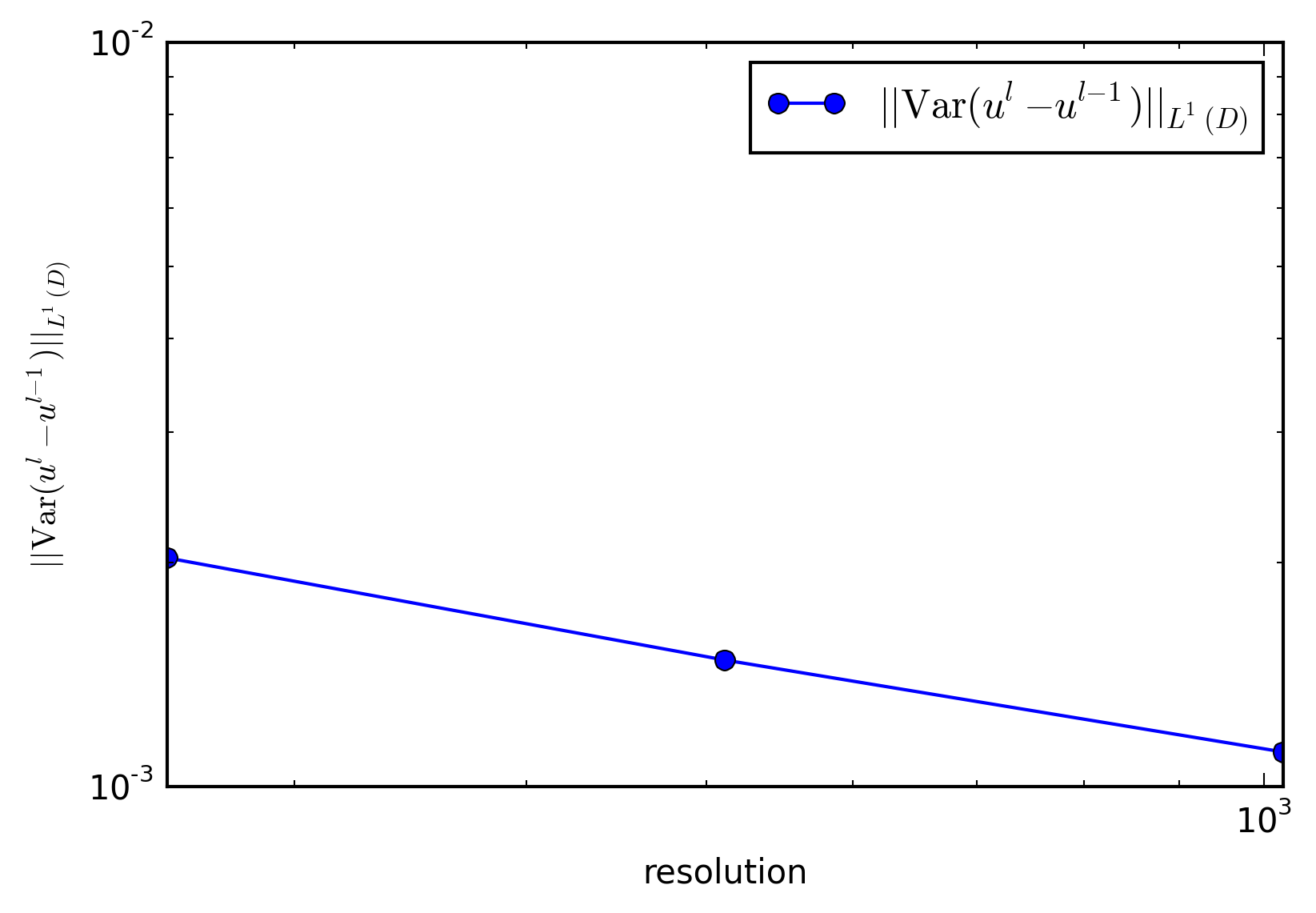}
	\caption{Numerically approximated variance between levels,$\Var(\Ypair{\psi}{g(u^l)-g(u^{l-1})})$ for the Kelvin-Helmholtz initial data \eqref{eq:init_kh} at time $T=2$, using a relaxation time of $T_0=0.05$.}
	\label{fig:mlmc_variance_stabilization}
\end{figure}

\begin{figure}[h!]
	\includegraphics[width=0.6\textwidth]{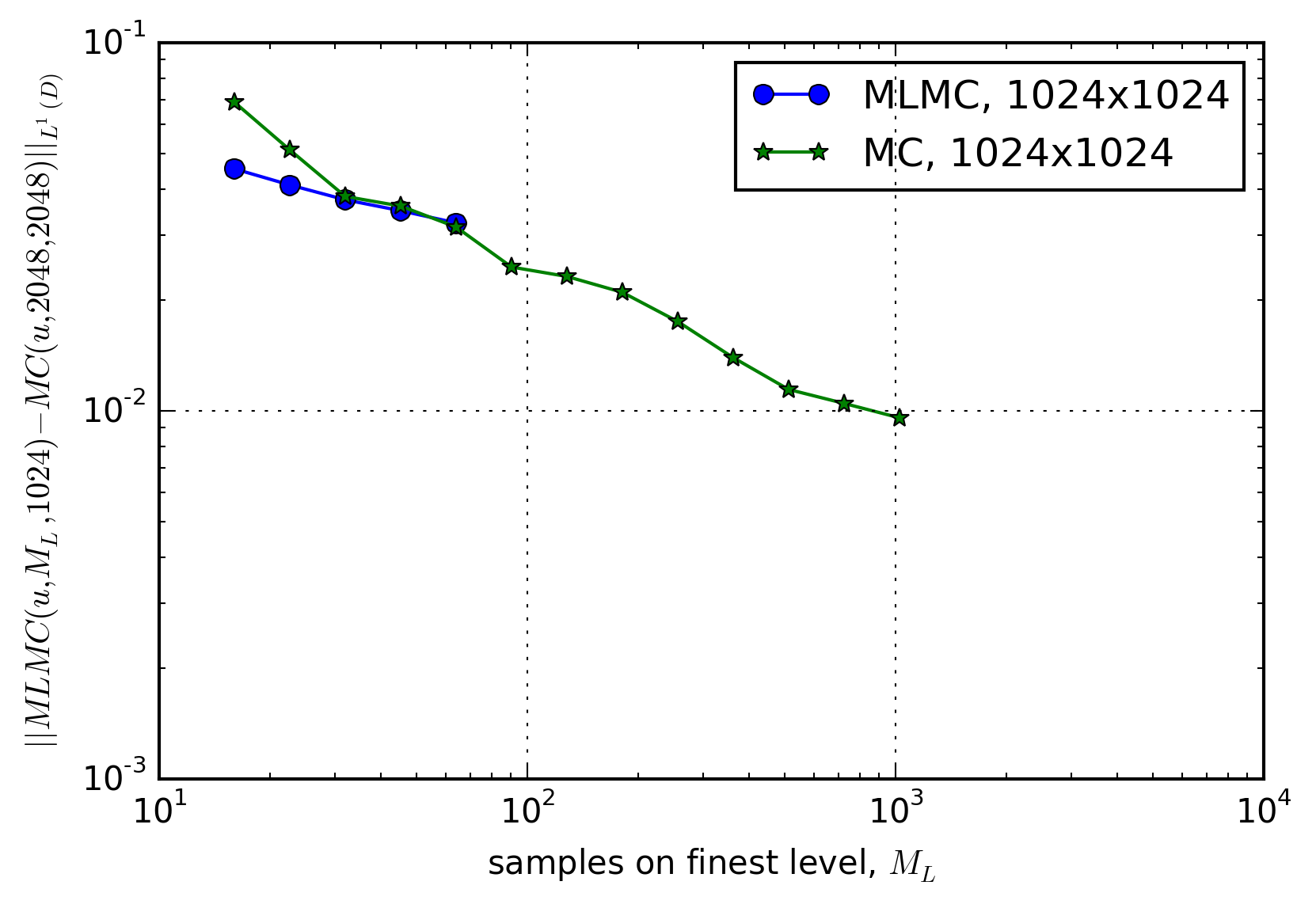}
	\caption{Error in computed mean for Kelvin-Helmholtz initial data at time $T=2$.}
	\label{fig:mlmc_vs_mc_samples_stabilization}
\end{figure}

\begin{figure}[h!]
	\begin{subfigure}[b]{0.45\textwidth}
		\includegraphics[width=\textwidth]{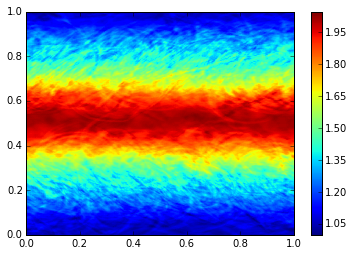}
		\caption{Mean from MLMC with relaxation.}
	\end{subfigure}
	\begin{subfigure}[b]{0.45\textwidth}
		\includegraphics[width=\textwidth]{reference_mean}
		\caption{Mean from reference solution.}
	\end{subfigure}
	
	\begin{subfigure}[b]{0.45\textwidth}
		\includegraphics[width=\textwidth]{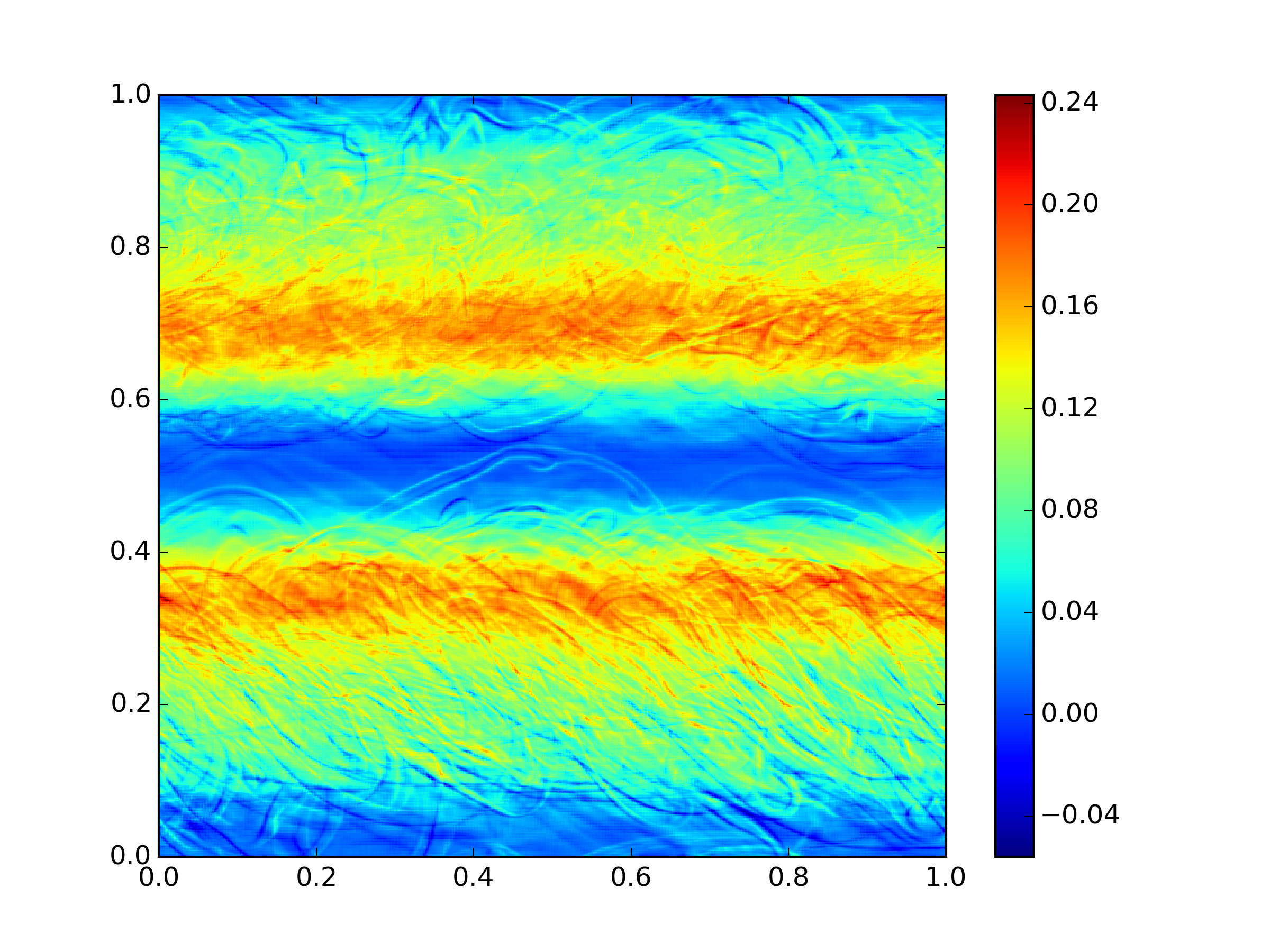}
		\caption{Variance from MLMC with relaxation.}
	\end{subfigure}
	\begin{subfigure}[b]{0.45\textwidth}
		\includegraphics[width=\textwidth]{reference_var}
		\caption{Variance from reference solution.}
	\end{subfigure}
	\caption{Comparison of the singlelevel Monte-Carlo algorithm against the MLMC algorithm for initial data \eqref{eq:init_kh} at time $T=2$. We used $M_L=16$ samples on the finest level for the MLMC computation, we use a relaxation time of $T=0.05$.}
	\label{fig:kh_numerical_stab}
\end{figure}

\clearpage
\section{Conclusions}
In this paper, we reviewed the concept of entropy measure valued solutions for hyperbolic conservation laws. We have laid the theoretical foundations for a multilevel Monte-Carlo algorithm for computing entropy measure valued solutions. 

In \Cref{thm:weak_mlmc}, an error estimate of the MLMC algorithm in the narrow topology was derived. We furthermore derived a precise criterion on the variance decay for gaining an asymptotic speed-up with MLMC compared to singlelevel Monte-Carlo. 

\subsection{Applicability of MLMC for scalar conservation laws}
The theory and numerical experiments reveal, that the MLMC method does work well for approximating EMVS of scalar conservation laws. The numerical experiment agrees with the theory. Furthermore, the MLMC was shown to considerably outperform the MC algorithm both theoretically and through numerical experiments.

\subsection{Applicability of MLMC for systems of conservation laws}
As was made clear by \Cref{thm:number_of_samples}, we can only expect the MLMC algorithm to give a speed-up compared to the MC algorithm if $V_l\to 0$ as $l\to\infty$. The numerical experiments show mixed results in this respect. For the case of the Kelvin-Helmholtz initial data \eqref{eq:init_kh}, the experiments indicated no variance reductions, and the numerical validation agrees. This serves as an example of a case where the measure valued solution is well-defined, and where the Monte-Carlo algorithm converges as a measure, but where the Multilevel Monte-Carlo algorithm can not improve the runtime of singlelevel Monte-Carlo. 

However, in the case of the shockvortex interaction, there is a decay in the variance $V_l$, and as expected, the MLMC algorithm does beat the MC algorithm. 
%
%
\clearpage
\bibliographystyle{siam}
\bibliography{biblo}
\end{document}